\documentclass[10pt,reqno]{amsart}
\usepackage{graphicx} 

\usepackage{amsmath,amssymb,amsthm,mathtools}
\usepackage{geometry}
\usepackage{xcolor}
\usepackage{hyperref}
\hypersetup{
    colorlinks,
    linkcolor={red!50!black},
    citecolor={blue!50!black},
    urlcolor={blue!80!black}
}
\usepackage{enumitem, float}
\usepackage{bm}
\usepackage{xfrac, nccmath}
\usepackage{caption}
\usepackage{parskip}
\newcommand{\R}{\mathbb{R}}

\newcommand{\vv}{m}
\newcommand{\ttt}{t}
\newcommand{\TM}{\mathcal{T}}
\newcommand{\SM}{\mathcal{S}}
\newcommand{\G}{\mathcal{G}}
\newcommand{\tK}{\tilde{K}}
\newcommand{\K}{K}
\newcommand{\compset}{\mathcal{A}_T}

\newtheorem{theorem}{Theorem}[section]
\newtheorem{lemma}[theorem]{Lemma}

\newtheorem{corollary}[theorem]{Corollary}

\theoremstyle{definition}

\numberwithin{equation}{section}

\title{Qualitative stability for a family of trace Sobolev inequalities}
	\author[R. Neumayer]{Robin Neumayer}
	\address{Department of Mathematical Sciences, Carnegie Mellon University, 5000 Forbes Avenue, Pittsburgh, PA 15213, United States of America }
	\email{neumayer@cmu.edu}

\begin{document}

\maketitle
\begin{abstract}
The goal of this short note is to prove qualitative stability for a family of trace Sobolev inequalities first proven by Carlen \& Loss for $p=2$ and by Maggi and the author for $p\in (1,n)$. This answers an open problem raised in a recent paper of Fan, Li \& Zhang and, in conjunction with their local analysis, yields sharp quantitative stability for this family of inequalities when $p=2$.
\end{abstract}
\section{Introduction}

Fix $n\geq 2$,  $p \in (1,n)$, and a half space $H = \{x \in \R^n : x\cdot e_1>0\}$. In \cite{MN}, Maggi \& Neumayer used a mass transportation argument to establish a one-parameter family of trace Sobolev inequalities on $H$, which encode the classical Sobolev and  Escobar inequalities as special cases. More specifically, consider the variational problem 
\begin{equation}\label{e: phi T}
\Phi_H(T) = \inf\{ \|\nabla u\|_{L^p(H)} :  u \in \compset\}\, \qquad T\geq 0,
\end{equation}
where the competitor class $\compset$ is given by
\begin{equation}
\compset= \{u \in \dot{W}^{1,p}(H) : \| u \|_{L^{p^*}(H)}=1, \| u\|_{L^{p^\sharp}(\partial H)} =T \} \,.
\end{equation}
Here the critical Sobolev exponents $p^* =\frac{np}{n-p}$ and $p^\sharp = \frac{(n-1)p}{n-p}$ are determined by scaling. Minimizers of $\Phi_H(T)$ were characterized in \cite{MN} for each $T>0$: the family of minimizers $\mathcal{M}_T$ comprises dilations and horizontal translations, and multiples by $\pm1$ of an explicit profile $U_T$ (recalled in section~\ref{ssec: extremals}). Thus,  in the resulting sharp Sobolev trace inequality
\begin{equation}\label{eqn: trace Sobolev}
\| \nabla u \|_{L^p(H)} \geq \Phi_H(T) \qquad \text{ for all } u \in \compset,
\end{equation}
equality holds if and only if $u \in \mathcal{M}_T.$  The case $T=0$ of \eqref{eqn: trace Sobolev} encodes the classical Sobolev inequality $\|\nabla u\|_{L^p(\R^n)}\geq S_{n,p}\|u\|_{L^{p^*}(\R^n)}$ for $u \in \dot{W}^{1,p}(\R^n)$, whose optimal constant $\Phi_H(0) = S_{n,p}$ and extremals on $\R^n$ were characterized in \cite{aubin1976, talenti1976best}. 
The Escobar inequality $\| \nabla u\|_{L^p(H)} \geq E_{n,p} \| u\|_{L^{p^\sharp}(\partial H)}$ for $u \in \dot{W}^{1,p}(H)$, whose sharp constant $E_{n,p}$ and extremals were given in \cite{Escobar1988,Nazaret2006} (see also \cite{Beckner93}), implies the linear lower bound $\Phi_H(T) \geq E_{n,p}T$ for all $T\geq 0$. This lower bound is saturated for exactly one value $T_E>0$ depending on $n$ and $p$.

When $p=2$, Carlen \& Loss \cite{carlenloss} first characterized minimizers of \eqref{e: phi T} using their method of competing symmetries \cite{CLcompetingsym}. In this case, the Sobolev and Escobar inequalities are closely linked to the Yamabe problem \cite{OGYamabe, Trudinger, AubinYamabe, SchYamabe, Escobar92, Escobar92b}.

With extremals of \eqref{eqn: trace Sobolev} characterized, stability is the  natural next question: if  $u \in \compset$ almost achieves equality in \eqref{eqn: trace Sobolev}, then is $u$ close, in a suitable sense, to some $v \in \mathcal{M}_T$?  Closeness to equality is quantified by the deficit
\[
\delta_T(u)= \|\nabla u\|_{L^p(H)}^p - \Phi_H(T)^p\,,
\]
while the strongest distance of a function $u \in \compset$ to the nearest extremal that one expects to control  is
\[
d_T(u) := \inf_{v \in \mathcal{M}_T}\| \nabla (u-v)\|_{L^p(H)}.
\]
In the recent paper \cite{FLZ}, Fan, Li \& Zhang gave a local quantitative analysis of the problem in the case $p=2$, showing that there exists $\alpha_T>0$ such that $\delta_T(u) \geq \alpha_T d_T(u)^2 + o(d_T(u)^2)$. The obstruction to turning this estimate into a global quantitative stability theorem was the absence of a {\it qualitative} stability result for \eqref{eqn: trace Sobolev}.  Here we fill this gap by establishing such a qualitative stability result. As a byproduct, we  resolve the open problem of global quantitative stability for this problem raised in \cite[Remark 1.1]{FLZ}. 
\begin{theorem}\label{thm: qual stability}
    Fix $p \in (1,n)$ and $T>0$. Given $\{u_k\} \subset \compset$, if $\delta_T(u_k) \to 0$, then $d_T(u_k)\to 0.$
\end{theorem}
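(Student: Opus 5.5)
We argue by concentration compactness, normalizing by the symmetries of the problem. Let $\{u_k\}\subset\compset$ with $\delta_T(u_k)\to0$. The constraints and the deficit are invariant under dilations $u\mapsto \lambda^{(n-p)/p}u(\lambda\,\cdot)$ and horizontal translations (those preserving $H$). We may also replace $u_k$ by $|u_k|$: this changes neither the norms nor $\|\nabla u_k\|_{L^p}$. At the end we must recover the sign, i.e.\ show that $u_k$ is close to $\pm$ a single extremal. The goal is to show that, after symmetries and a subsequence, $u_k\to u$ strongly in $\dot W^{1,p}(H)$ with $u\in\compset$. Then $\|\nabla u\|_p=\Phi_H(T)$, so $u\in\mathcal M_T$ by the characterization of minimizers from \cite{MN}. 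By the usual subsequence argument this gives $d_T(u_k)\to0$.

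First we normalize the sequence. The sequence is bounded in $\dot W^{1,p}(H)$. We apply Lions' concentration compactness, or a profile decomposition, simultaneously to the two measures $|u_k|^{p^*}dx$ on $H$ and $|u_k|^{p^\sharp}d\mathcal H^{n-1}$ on $\partial H$. Using the symmetries, we fix the normalization so that half of the $L^{p^*}$ mass of $u_k$ lies in a unit half-ball centered on $\partial H$, or else in a unit ball far from $\partial H$.

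The key structural input is strict subadditivity of $\Phi_H^p$ with respect to splitting the masses. Suppose the sequence splits into pieces carrying $L^{p^*}$ masses $a_i$ with $\sum a_i=1$ and boundary masses $t_i^{p^\sharp}$ with $\sum t_i^{p^\sharp}=T^{p^\sharp}$. The energy then asymptotically satisfies
\begin{equation*}
\Phi_H(T)^p \ \ge\ \sum_i a_i^{p/p^*}\,\Phi_H\!\bigl(t_i a_i^{-1/p^*}\bigr)^p ,
\end{equation*}
using $p$-homogeneity and scaling. Pieces that escape to the interior, far from $\partial H$, carry no boundary mass and contribute at least $S_{n,p}^p a_i^{p/p^*}$. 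I would show that equality forces a single piece. The argument combines $p/p^*<1$ with the bounds $\Phi_H(t)\ge \max\{S_{n,p},E_{n,p}t\}$ and the monotonicity/convexity properties of $\Phi_H$ established in \cite{MN}. This requires checking that the function $(a,t^{p^\sharp})\mapsto a^{p/p^*}\Phi_H(t a^{-1/p^*})^p$ is strictly subadditive. Both coordinates are additive, so this amounts to a strict concavity-type statement for a $1$-homogeneous function of two variables. I expect this to be the main obstacle, especially ruling out splittings in which one piece carries all the boundary mass and almost no volume, so that it behaves like an Escobar extremal. For that case I would use that $\Phi_H(T)<E_{n,p}T+S_{n,p}$-type strict inequalities hold for $T\ne T_E$, and handle $T=T_E$ directly.

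Having excluded dichotomy and vanishing, both the volume and boundary masses are tight after translation. If the profile instead sits in a unit ball far from $\partial H$, then the boundary mass $T^{p^\sharp}>0$ is lost, contradicting the previous step. So the masses concentrate near a half-ball at $\partial H$. Moreover, they cannot concentrate further to a point, since the chosen scale fixes half the mass at unit scale.

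Take a weak limit $u_k\rightharpoonup u$ in $\dot W^{1,p}$. Tightness together with Rellich compactness on bounded sets, for both the interior embedding and the trace embedding, gives $\|u\|_{L^{p^*}}=1$ and $\|u\|_{L^{p^\sharp}(\partial H)}=T$. Lower semicontinuity then yields $\|\nabla u\|_p\le\Phi_H(T)$, so $u$ is a minimizer. Since $\|\nabla u_k\|_p\to\|\nabla u\|_p$, uniform convexity of $L^p$ upgrades the convergence to strong convergence of $\nabla u_k$.

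Finally, undo the absolute value. Strong convergence of $|u_k|$ to the positive extremal $U$ gives $\|\nabla|u_k|\|_p-\|\nabla u_k\|_p=0$. Because $U$ is strictly positive, the negative part of $u_k$ must vanish in the limit: otherwise $u_k^\pm$ split the masses, which is again excluded by strict subadditivity. Hence $u_k\to\pm U$ up to symmetries.
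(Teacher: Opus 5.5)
\textbf{Gap 1: the strict binding inequality is not proved.} Your architecture matches the paper's: normalize by dilations and horizontal translations, exclude vanishing and dichotomy by a strict binding inequality, pass to a minimizing limit, and upgrade to strong convergence. The ``key structural input'' you isolate is exactly Theorem~\ref{thm: strict energy gap}. But that inequality is the entire difficulty, your proposal does not prove it, and the route you sketch does not work as stated.

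Write $a=m^{p^*}$ and $\tau=t^{p^\sharp}$. The function $(a,\tau)\mapsto a^{p/p^*}\Phi_H(\tau^{1/p^\sharp}a^{-1/p^*})^p$ is not $1$-homogeneous. Under $u\mapsto\lambda u$ the triple $(\|u\|_{L^{p^*}(H)}^{p^*},\|u\|_{L^{p^\sharp}(\partial H)}^{p^\sharp},\|\nabla u\|_{L^p(H)}^p)$ scales like $(\lambda^{p^*},\lambda^{p^\sharp},\lambda^p)$ with $p^*\neq p^\sharp$. So no scaling reduces \eqref{eqn: strict convexity} to a concavity statement. Moreover, the ratios $t_i/m_i$ can lie anywhere in $[0,\infty)$, including ranges where only crude bounds on $\Phi_H$ are known.

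Worse, for large ratios $\Phi_H$ is governed by \eqref{eqn: asymptotic}. The corresponding bound $\|\nabla u\|_{L^p(H)}^p\ge (p^\sharp)^{-p}\tau^p a^{1-p}$ is convex and $1$-homogeneous in $(a,\tau)$, hence exactly additive on splittings with $\tau_1/a_1=\tau_2/a_2$. There the two sides of \eqref{eqn: strict convexity} agree to leading order. Strictness then depends on lower-order information that the bounds and the monotonicity/convexity properties from \cite{MN} do not supply.

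The case you flag as the main obstacle is actually the easy one. A piece with boundary mass but vanishing volume has energy at least $(t_1^{p^\sharp}/(p^\sharp m_1^{p^\sharp-1}))^p\to\infty$ by \eqref{eqn: asymptotic}.

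The missing idea is the paper's. Instead of using properties of $\Phi_H$, it runs the mass-transport proof of \cite{MN} on one explicit competitor $w\in\compset$. This $w$ consists of cut-off copies of $m_1U_{T_1}$ and $m_2U_{T_2}$ placed at $\pm2^{R+2}e_n$, and its energy equals the right side of \eqref{eqn: strict convexity} up to $o_R(1)$. Cyclical monotonicity of the Brenier map $\TM$ from $w^{p^*}\mathcal{L}^n$ to $U_T^{p^*}\mathcal{L}^n$ forces all but a small amount of the mass of the upper (smaller) bubble into $\{y_n\ge 0\}$. Meanwhile, on a set $\G$ of definite mass, $-\nabla w$ points strictly into $\{y_n<0\}$. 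So $\TM-se_1$ is quantitatively not parallel to $-\nabla w$ there, and \eqref{eqn: deficit and CS} yields $\delta_T(w)\ge 2c_0$ with $c_0$ independent of $R$.

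\textbf{Gap 2: the compactness step.} Tightness plus Rellich does not give $\|u\|_{L^{p^*}(H)}=1$ and $\|u\|_{L^{p^\sharp}(\partial H)}=T$. Both embeddings are critical and fail to be compact even on bounded sets. Your normalization only rules out an atom carrying more than half of the mass; it does not rule out, e.g., an atom of mass $1/2$ alongside a diffuse part.

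The paper handles this with the second concentration-compactness lemma with boundary term (Lemma~\ref{lem: CC2}), whose atoms cost $\mathrm{g}_i\ge m_i\Phi_H(t_i/m_i)$. It then applies Theorem~\ref{thm: strict energy gap} again, first extended non-strictly to countable sums, to show there are no atoms. Tightness of the boundary measure also needs a separate argument, such as Lemma~\ref{lem: small trace}. The atom exclusion, like your exclusion of vanishing and dichotomy, again rests on the strict binding inequality you have not established.
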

Combining Theorem~\ref{thm: qual stability} with \cite[Theorem 1.1]{FLZ} yields global quantitative stability when $p=2$.
\begin{corollary}\label{cor: quant stability}
    Fix $p=2$ and $T>0$. There exists $\alpha'_T>0$ such that $\delta_T(u) \geq \alpha_T' d_T(u)^2$ for all $u \in \compset.$
\end{corollary}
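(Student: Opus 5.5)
We will prove Corollary~\ref{cor: quant stability} by contradiction. The idea is to reduce to the regime of small distance using Theorem~\ref{thm: qual stability}, and then apply the local expansion of \cite[Theorem 1.1]{FLZ}: there is $\alpha_T>0$ such that $\delta_T(u)\geq \alpha_T d_T(u)^2+o(d_T(u)^2)$ as $d_T(u)\to 0$ for $u\in\compset$.

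\emph{Step 1: a global a priori bound on $d_T$.} For any $u\in\compset$ and any $v\in\mathcal{M}_T$ we have $\|\nabla v\|_{L^2(H)}=\Phi_H(T)$. The triangle inequality then gives
\[
d_T(u)\leq \|\nabla u\|_{L^2(H)}+\Phi_H(T)=\sqrt{\delta_T(u)+\Phi_H(T)^2}+\Phi_H(T).
\]
Consequently, whenever $d_T(u)>0$,
\[
\frac{\delta_T(u)}{d_T(u)^2}\geq g(\delta_T(u)), \qquad g(s):=\frac{s}{\bigl(\sqrt{s+\Phi_H(T)^2}+\Phi_H(T)\bigr)^2}.
\]
The function $g$ is positive and nondecreasing on $(0,\infty)$: the numerator grows linearly in $s$, while the denominator grows only like $s$ for large $s$. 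Hence for every $\varepsilon>0$ we get $\delta_T(u)/d_T(u)^2\geq g(\varepsilon)>0$ on the set $\{\delta_T(u)\geq\varepsilon\}$. If $d_T(u)=0$ the inequality to be proved is trivial, so only $u$ with $d_T(u)>0$ need to be considered.

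\emph{Step 2: contradiction.} Suppose no $\alpha_T'>0$ works. Then there are $u_k\in\compset$ with $d_T(u_k)>0$ and $\delta_T(u_k)/d_T(u_k)^2\to 0$. By Step 1, this forces $\delta_T(u_k)\to 0$: otherwise, along a subsequence $\delta_T(u_k)\geq\varepsilon$ for some $\varepsilon>0$, and the ratio would stay above $g(\varepsilon)>0$. Theorem~\ref{thm: qual stability} then yields $d_T(u_k)\to 0$. Now the local estimate of \cite{FLZ} applies along the sequence and gives
\[
\frac{\delta_T(u_k)}{d_T(u_k)^2}\geq \alpha_T+o(1)\to\alpha_T>0,
\]
which contradicts $\delta_T(u_k)/d_T(u_k)^2\to 0$. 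Hence some $\alpha_T'>0$ works for all $u\in\compset$.

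The only delicate point is to confirm that the hypotheses of \cite[Theorem 1.1]{FLZ} are met. That result should require exactly $u\in\compset$ with $d_T(u)$ small, measured in the same $\dot W^{1,2}$ distance to $\mathcal{M}_T$ as here, and the $o(d_T^2)$ term should be uniform over such $u$. Theorem~\ref{thm: qual stability} supplies precisely this smallness, so no further compactness argument is needed.
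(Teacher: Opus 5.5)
Your proposal is correct and follows essentially the same route as the paper. The large-deficit regime is handled by the trivial bound $d_T(u)\lesssim\|\nabla u\|_{L^2(H)}$, and the small-deficit regime by Theorem~\ref{thm: qual stability} combined with the uniform local estimate of \cite{FLZ}; the paper phrases this directly, with an explicit threshold $\delta_T(u)\le\delta_0\|\nabla u\|_{L^2(H)}^2$ and $\alpha_T'=\min\{\alpha_T/2,\delta_0/4\}$, rather than by contradiction. Your monotonicity claim for $g$ is true, but the justification you give is loose; it follows at once from writing $g(s)=\frac{\sqrt{s+\Phi_H(T)^2}-\Phi_H(T)}{\sqrt{s+\Phi_H(T)^2}+\Phi_H(T)}$.
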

Quantitative stability for the Escobar inequality  with $p=2$ was shown in \cite{Ho22}. For the Sobolev inequality on $\R^n$, sharp quantitative stability was shown in \cite{BianchiEgnell91} for $p=2$, see also \cite{DEFFL}, and for $p\in (1,n)$ in \cite{FZ}, after \cite{ciafusmag07, FN, Neumayer}.

\begin{figure}
    \centering
    \includegraphics[width=0.6\linewidth]{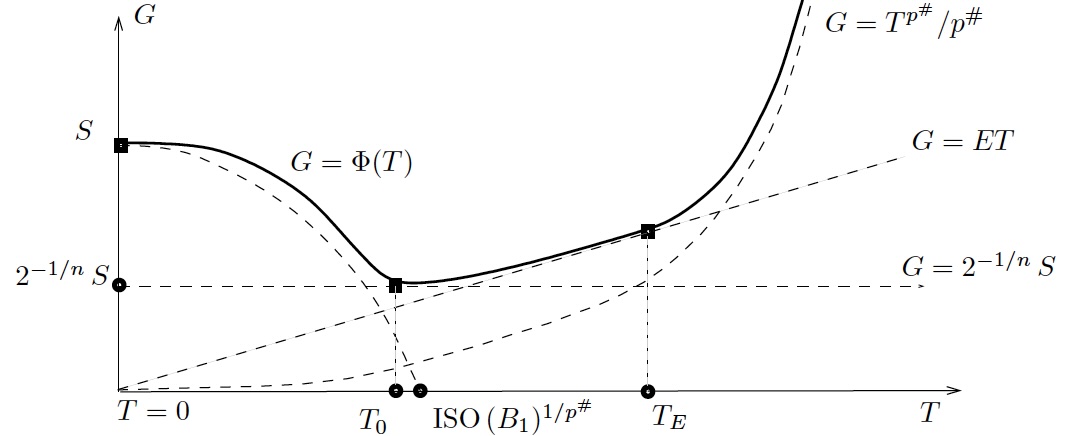}
    \caption{\small{A plot illustrating known properties of $T\mapsto \Phi_H(T)$; see section~\ref{ssec: extremals} for discussion.}}
    \label{fig1}
\end{figure}

At first glance, one might expect Theorem~\ref{thm: qual stability} to follow directly from standard concentration compactness and scaling methods \cite{lions1985}, as is the case for many functional inequalities, including the classical Sobolev inequality on $\R^n$ and the Escobar inequality on $H$. The fundamental difference here is that the sharp constant $\Phi_H(T)$ depends nontrivially on $T$. This means that splitting of mass cannot be ruled out through homogeneity and concave scaling.

More explicitly, concentration compactness arguments essentially reduce the proof of Theorem~\ref{thm: qual stability} to ruling out the possibility that a sequence  $\{u_k\}\subset \compset$ with $\delta_T(u_k)\to0$ fails to have $d_T(u_k)\to 0$  because it splits into two asymptotically non-interacting profiles $u_k^1$ and $u_k^2$. Suppose this happens, and $u_k^1$ and $u_k^2$ have $L^{p^*}(H)$ norms $\vv_1, \vv_2>0$ and $L^{p^\sharp}(\partial H)$ norms $\ttt_1,\ttt_2\geq 0$  respectively, with  $\vv_1^{p^*}+\vv_2^{p^*}=1$ and $\ttt_1^{p^\sharp}+\ttt_2^{p^\sharp}=T^{p^\sharp}$. 

For the classical Sobolev inequality, for instance, scaling easily shows such splitting is energetically too expensive: applying the Sobolev inequality to $u_k^1$ and $u_k^2$ separately shows that the total energy is at least $S_{n,p}^p (\vv_1^{p} + \vv_2^{p})$, which by strict concavity of $s\mapsto s^{p/p^*}$ is {\it strictly} larger than the infimal energy $S_{n,p}^p =S_{n,p}^p (\vv_1^{p^*}+\vv_2^{p^*})^{p/p^*} $. 

Instead, ruling out splitting in the present setting requires comparing the energy lower bound obtained by applying \eqref{eqn: trace Sobolev} to each profile separately, namely $\vv_1^p \Phi_H(\sfrac{\ttt_1}{\vv_1})^p +\vv_2^p \Phi_H(\sfrac{\ttt_2}{\vv_2})^p$, with the infimal energy $\Phi_H(T)^p$. These quantities cannot be related by scaling and have no clear strict ordering a priori. Even if $T$ is restricted to a particular interval, the ratios $\sfrac{\ttt_i}{\vv_i}$ may take any value in $[0,\infty)$.

In view of this discussion, the main tool to prove Theorem~\ref{thm: qual stability} is the following strict binding inequality for $\Phi_H$.
\begin{theorem}\label{thm: strict energy gap}
    Fix $T>0$ and let $\vv_1,\vv_2>0$ and $\ttt_1, \ttt_2\geq 0$ satisfy $\vv_1^{p^*} + \vv_2^{p^*} =1$ and $\ttt_1^{p^\sharp} + \ttt_2^{p^\sharp} =T^{p^\sharp}$. Then 
    \begin{equation}\label{eqn: strict convexity}
        \Phi_H(T)^p < \vv_1^p \Phi_H\left(\mfrac{\ttt_1}{\vv_1}\right)^p +\vv_2^p \Phi_H\left(\mfrac{\ttt_2}{\vv_2}\right)^p\,.
    \end{equation}
\end{theorem}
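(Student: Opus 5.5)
We argue by contradiction from the existence of minimizers and a gluing construction. The starting observation is that the right-hand side of \eqref{eqn: strict convexity} is always at least $\Phi_H(T)^p$. Take (near-)optimal functions $w_1, w_2$ for $\Phi_H(\ttt_1/\vv_1)$ and $\Phi_H(\ttt_2/\vv_2)$. For $\ttt_i=0$, use a Sobolev extremal centered far in the interior of $H$, truncated so it has negligible trace. Rescale $w_i$ to have $L^{p^*}$ norm $\vv_i$ and trace norm $\ttt_i$; this is possible because the ratio $\ttt/\vv$ is invariant under dilation. Then translate the two pieces horizontally far apart (after compact-support truncation). The sum $u = w_1+w_2$ has $\|u\|_{L^{p^*}}^{p^*}\approx \vv_1^{p^*}+\vv_2^{p^*}=1$ and trace norm$^{p^\sharp}\approx T^{p^\sharp}$. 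It lies in $\compset$ after a small correction, using continuity of $\Phi_H$, which we establish along the way. Its energy is $\approx \sum \vv_i^p\Phi_H(\ttt_i/\vv_i)^p$, which gives the non-strict inequality $\le$. So we must rule out equality.

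To rule out equality, suppose \eqref{eqn: strict convexity} is an equality for some admissible $\vv_i,\ttt_i$. Then the glued functions $u_R$ (pieces separated by distance $R\to\infty$) form a minimizing sequence for $\Phi_H(T)$. The key step is to exploit the interaction. The crucial choice is to use \emph{exact} minimizers $U^i\in\mathcal M_{\ttt_i/\vv_i}$, which exist by \cite{MN}, or Sobolev bubbles when $\ttt_i=0$. These are positive with explicit algebraic decay $|x|^{-(n-p)/(p-1)}$ and are not compactly supported. Place them at horizontal separation $R$ and consider $u_R=U^1_R+U^2_R$, without truncation. Because both pieces are positive, the $L^{p^*}$ and $L^{p^\sharp}$ norms gain a positive interaction term. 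For instance,
\[
\int (a+b)^{p^*} \ge \int a^{p^*}+\int b^{p^*} + c\int a^{p^*-1}b ,
\]
and the interaction is of order $R^{-(n-p)/(p-1)}$. The gradient energy, by contrast, gains only a comparable or smaller cross term. Normalizing $u_R$ back into $\compset$ requires adjusting both norms. We will use a two-parameter family: a dilation, together with mixing in a trace-changing perturbation such as a small multiple of $U_T$-type profiles, or equivalently we use the derivative $\Phi_H'$. We then compare
\[
\frac{\|\nabla u_R\|^p}{\ldots}
\]
to first order in the interaction and aim to show that the energy drops strictly below $\sum\vv_i^p\Phi_H(\ttt_i/\vv_i)^p$. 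This would contradict $\Phi_H(T)^p$ being that value. The first-order computation uses the Euler--Lagrange equations of $U^i$: $-\Delta_p U^i=\lambda_i (U^i)^{p^*-1}$ in $H$ and $|\nabla U^i|^{p-2}\partial_\nu U^i=\mu_i (U^i)^{p^\sharp-1}$ on $\partial H$. These convert the gradient cross term $\int|\nabla U^1|^{p-2}\nabla U^1\cdot\nabla U^2$ into the bulk and boundary cross terms with the multipliers $\lambda_1,\mu_1$, which are exactly what $\Phi_H$'s Lagrange structure predicts. The sign then reduces to a strict concavity effect from the exponents $p^*,p^\sharp>p$.

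An alternative route that I would try in parallel is to avoid interaction expansions altogether. Assume equality. Then the glued sequence $u_R$ is minimizing, and by the compactness theory for minimizing sequences (concentration compactness \cite{lions1985} applied to $\Phi_H(T)$ itself, modulo symmetries) it converges to an element of $\mathcal M_T$. But $u_R$ visibly splits into two separated nontrivial profiles. The difficulty is that this compactness is essentially Theorem~\ref{thm: qual stability}, which is circular. Hence I would instead use that a minimizer $v$ of the \emph{split} configuration exists: $v=U^1+U^2(\cdot - R e_2)$ would have to be a minimizer for finite large $R$ if the interaction expansion is non-positive. That contradicts the characterization $\mathcal M_T$, since $v$ is not a single profile. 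The main obstacle, in either route, is the case where the ratios $\ttt_i/\vv_i$ are extreme ($0$ or very large), and the case $p\ne 2$. There the nonlinear cross terms $\int|\nabla a+\nabla b|^p$ must be expanded carefully with the decay rates, making sure the positive gain in the constraint norms dominates the gradient interaction. I expect this sign computation, uniformly in the parameters, to be the heart of the proof.
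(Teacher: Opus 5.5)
Your gluing argument for the non-strict inequality is fine. The strict inequality, which is the whole content of the theorem, is not proved: you defer it to a ``sign computation'' you do not carry out, and the heuristic you give for it does not hold up.

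A gain in the constraint norms is not favorable in general, because the multipliers in your Euler--Lagrange equations change sign along the family. We have $\lambda<0$ for $T>T_E$, since $U_{BE}$ solves $-\Delta_pU=\lambda U^{p^*-1}$ with $\lambda<0$. We have $\mu<0$ whenever $s_T>0$, since then $\partial_\nu U_T<0$ on $\partial H$. So increasing $\|u\|_{L^{p^*}(H)}$ or the trace norm can lower the infimal energy at the new constraint values. To make the contradiction rigorous you would need two things:
(i) differentiability of $\Phi_H$, both to turn the equality assumption into equality of the multipliers of the two pieces with those of $\Phi_H(T)$, and to bound $\vv^p\Phi_H(\ttt/\vv)^p$ from below, to first order, at the perturbed constraint values of the glued function;
(ii) the sign of what is left over, namely, for the two profiles $a,b$,
\[
\int_H\Big(|\nabla a+\nabla b|^p-|\nabla a|^p-|\nabla b|^p-p|\nabla a|^{p-2}\nabla a\cdot\nabla b-p|\nabla b|^{p-2}\nabla b\cdot\nabla a\Big)\,dx .
\]
For $p=2$ this is $-2\int_H\nabla a\cdot\nabla b$, which is negative at leading order by a Green's-function computation. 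So your first route is plausibly completable there, with further work on differentiability of $\Phi_H$ and on $\ttt_i=0$, where no minimizer exists and an untruncated bubble has nonzero trace. For $p\neq2$, however, this integral receives contributions of the same order $R^{-(n-p)/(p-1)}$ from the whole region between the pieces. There $|\nabla a|\sim|\nabla b|$ and the integrand has no pointwise sign. No Euler--Lagrange or flux identity determines the sign of the integral, and you give no argument for it.

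Your second route does not help. $U^1+U^2(\cdot-Re_2)$ is not in $\compset$. An expansion showing its normalized energy is at most $\Phi_H(T)^p$ up to $o(R^{-(n-p)/(p-1)})$ does not produce an exact minimizer, so the classification of $\mathcal M_T$ cannot be invoked.

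The missing idea is that one need not detect the small interaction at all. The paper glues truncated copies of $\vv_1U_{T_1}$ and $\vv_2U_{T_2}$, far apart in the tangential direction $e_n$, into one $w\in\compset$. It then bounds $\delta_T(w)$ below by a constant independent of $R$, using the remainder estimate \eqref{eqn: deficit and CS} from the mass-transport proof. The argument runs as follows.
\begin{itemize}
\item Cyclical monotonicity of the Brenier map $\TM$ from $w^{p^*}\mathcal L^n$ to $U_T^{p^*}\mathcal L^n$ forbids the two pieces from both sending non-negligible mass across $\{y_n=0\}$ in opposite directions.
\item The lower piece carries at least half the mass and $\nu(\{y_n<0\})=\frac12$, so all but a small amount of the upper piece's mass is sent into $\{y_n\ge0\}$.
\item On a set of definite mass in the upper piece, $-\nabla w$ has a definite negative $e_n$-component, so $-\nabla w$ and $\TM-se_1$ are quantitatively misaligned there.
\end{itemize}
This yields an $O(1)$ gap rather than an $O(R^{-(n-p)/(p-1)})$ one. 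It uses no Euler--Lagrange equations, multipliers, or differentiability of $\Phi_H$, and it is uniform in $p\in(1,n)$ and $\ttt_i\ge0$.
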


Another plausible approach to proving Theorem~\ref{thm: qual stability} would be to trace through the mass transportation argument of \eqref{eqn: trace Sobolev} from \cite{MN} (recalled in section~\ref{ssec:OT proof}) to extract information about the optimal transport map $\TM$ from $u^{p^*}\mathcal{L}^n$ to $U_T^{p^*}\mathcal{L}^n$ and use this to estimate $d_T(u)$. This has been done quantitatively for the isoperimetric inequality \cite{FMP}, the $1$-Sobolev inequality \cite{figmagpraa}, and the Sobolev inequality with $p\in(1,n)$ restricted to radially symmetric functions \cite{ciafusmag07}. This approach faces serious difficulties in the present context, and already for the Sobolev inequality on $\R^n$, in part because the control on $\TM$ degenerates in regions where  $u$ is small.

To prove Theorem~\ref{thm: strict energy gap}, we {\it do} use the mass transportation proof of \cite{MN}. The key difference is that we only need to control the optimal transport map $\TM$ for one explicit test function $w \in \compset$. We take $w$ to be the sum of cut off and translated copies of $\vv_1 U_{T_1}$ and $\vv_2 U_{T_2}$ where $T_i={\ttt_i}/{\vv_i}$. The energy $\int_H |\nabla w|^p$ can be  made arbitrarily close to the right-hand side of \eqref{eqn: strict convexity}. (Non-strict inequality in \eqref{eqn: strict convexity}  is immediate from testing against $w$.)

Say $\vv_1 \leq \vv_2$. The cyclical monotonicity of the graph of the optimal transport map $\TM$ associated with $w$ forces $\TM$ to map most of the mass of $\vv_1 U_{T_1}$ into a half-space $\{y_n >0\}$. On the other hand, a basic quantitative estimate obtained from the mass transportation proof (see \eqref{eqn: deficit and CS}) shows that if $\delta_T(w)$ is small, then $\TM-se_1$ is parallel to $-\nabla w$ on a set with a definite amount of mass for a fixed $s=s_T$. These properties are incompatible, forcing a definite lower bound for $\delta_T(w)$ and thus proving Theorem~\ref{thm: strict energy gap}.

\noindent{\bf Acknowledgements.} The author thanks Dejan Slep\v{c}ev for a useful discussion and Song Fan for valuable feedback on an earlier version of this manuscript.  
This work was partly supported by NSF CAREER grant DMS-2340195 and NSF RTG grant DMS-2342349. 

\section{Preliminaries}
In this section, we give some background and notation that will be needed in the rest of the paper. Let $n\geq2$ and $p \in (1,n)$ be fixed throughout. We let $\mathcal{L}^n$ be the Lebesgue measure on $\R^n$ and $\mathcal{H}^{n-1}$ the $(n-1)$-dimensional Hausdorff measure. 
Let $\dot{W}^{1,p}(H)$ be the space of $L^1_{\rm loc}(H)$ functions with distributional gradient in $L^p(H;\R^n)$ that vanish at infinity in the sense that $\mathcal{L}^n(\{|u|>t\})<\infty$ for every $t>0$. For a function $u \in \dot{W}^{1,p}(H)$ we simply write $u$ to refer to the trace $Tu \in L^{p^\sharp}(\partial H)$ of $u$.

\subsection{Minimizers for $\Phi_H(T)$}\label{ssec: extremals}
In \cite{carlenloss} for $p=2$ and \cite{MN} for $p\in (1,n)$, extremal functions of \eqref{eqn: trace Sobolev} (equivalently, minimizers of \eqref{e: phi T}) were characterized as follows. For each $T>0$, the family $\mathcal{M}_T$ of extremals is given by
\[
\mathcal{M}_{T} = \left\{\pm \alpha^{-n/p^*} U_T( (\cdot - x_0)/\alpha) : x_0\in \partial H, \alpha>0 \right\}
\]
where the function $U_T$ is defined as follows.
For a given function $f:\R^n \to \R$ and $s \in \R$, let $\tau_sf(x) = f(x-se_1)$. Recall from the introduction that $T_E>0$ is the unique $T$ for which $\Phi_H(T_E)=E_{n,p}T_E$, and thus is the unique $T$ for which the linear lower bound $\Phi_H(T) \geq E_{n,p}T$  implied by the Escobar inequality
\begin{equation}
\label{eqn: escobar}
 \| \nabla u\|_{L^p(H)} \geq E_{n,p} \| u\|_{L^{p^\sharp}(\partial H)} \qquad \text{ for all } u \in \dot{W}^{1,p}(H)
\end{equation}
is saturated; see Figure~\ref{fig1}. 

\begin{itemize}
    \item For $T \in\left(0, T_E\right)$,  there is a unique $s_T \in \mathbb{R}$ such that
\begin{equation}\label{eqn: Sobolev minimizers}
U_T = \frac{\tau_{s_T} U_S}{\left\|\tau_{s_T} U_S\right\|_{L^{p^{\star}}(H)}}\chi_{\overline{H}} \qquad \text{ where } \qquad U_S(x) = \Big( 1 + |x|^{{p}/{(p-1)}}\Big)^{{(p-n)}/{p}}\,.
\end{equation}
The function $U_S$ is the unique (modulo symmetries) extremal for the Sobolev inequality on $\R^n$. When $p=2$, up to a constant multiple, the conformal metric $U_T^{4/(n-2)} g_{\rm euc}$ on $H$ is isometric to a geodesic ball on the round sphere, with radius tending to zero as $T \to T_E$ and to the diameter of the sphere as $T\to 0.$ 

\item  For $T=T_E$, i.e. the point corresponding to the Escobar trace inequality, 
\begin{equation}
    \label{eqn: escobar functions}
U_T = \frac{\tau_{s_T} U_E}{\left\|\tau_{s_T} U_E\right\|_{L^{p^{\star}}(H)} }\chi_{\overline{H}} \qquad \text{ where } \qquad U_E(x) = |x|^{(p-n)/(p-1)}\,
\end{equation}
with $s_T=-1$.  The function $U_{T_E}$ is the unique (modulo symmetries) extremal function for the Escobar trace inequality. Replacing $s_T=-1$ by any other $s<0$ in \eqref{eqn: escobar functions} gives a dilation of $U_T$ and hence another extremal.
When $p=2$, the conformal metric $U_{T_E}^{4/(n-2)} g_{\rm euc}$ on $H$ is isometric to a ball in $\R^n$.

\item For $T>T_E$, there is a unique $s_T<-1$ such that
\begin{equation}
    \label{eqn: BE family}
 U_T = \frac{\tau_{s_T } U_{B E}}{\left\|\tau_{s_T } U_{B E}\right\|_{L^{p^{\star}}(H)}}\chi_{\overline{H}} , \qquad \text { where }\qquad U_{B E}(x)=\left(|x|^{p /(p-1)}-1\right)^{(p-n) / p}.
\end{equation}
When $p=2$, up to a constant multiple, the conformal metric $U_{T}^{4/(n-2)} g_{\rm euc}$ on $H$ is isometric to a geodesic ball in hyperbolic space, with radius tending to zero as $T\to T_E$ and to infinity as $T\to \infty$.
\end{itemize}
No minimizers in \eqref{e: phi T} exist for $T=0$, since the support of the extremals for the Sobolev inequality is all of $\R^n$.

In \cite{MN} we establish various properties of the function $T\mapsto \Phi_H(T)$. First, let $T_0 \in (0,T_E)$ be the 
$L^{p^\sharp}(\partial H)$ norm of \eqref{eqn: Sobolev minimizers} with $s_{T_0}=0$. Then $\Phi_H$ uniquely achieves its global minimum $\Phi_H\left(T_0\right)=S_{n, p} / 2^{1 /n}$  at $T_0$, saturating the constant lower bound $\Phi_H(T) \geq S_{n,p}/2^{1/n}$ (see Figure~\ref{fig1}) resulting from the inequality
 \begin{equation}\label{eqn: gradient domain}
      \| \nabla u\|_{L^p(H)} \geq \frac{S_{n,p}}{2^{1/n}} \| u\|_{L^{p^*}(H)} \qquad \text{ for all } u \in \dot{W}^{1,p}(H)\,.
 \end{equation}
The inequality \eqref{eqn: gradient domain} is a direct consequence of the Sobolev inequality on $\R^n$ and reflection.

We additionally show in \cite{MN} that $T\mapsto \Phi_H(T)$ is strictly decreasing on $(0,T_E)$, concave on $(0,T_*)$ for some $T_*\in (0,T_0)$, and strictly increasing and convex on $(T_0, \infty)$. A simple divergence theorem computation shows that 
\begin{equation}
    \label{eqn: asymptotic}
    \Phi_H(T)>\frac{T^{p^{\sharp}}}{p^{\sharp}}\qquad \text{ for every } T>0,
\end{equation} 
and direct analysis of \eqref{eqn: BE family} shows this lower bound is asymptotically saturated as $T \rightarrow \infty$ (see Figure~\ref{fig1}).

In \cite{maggivillaniJGA}, Maggi \& Villani show that for any open, connected Lipschitz domain $\Omega\subset \R^n$, $\Phi_\Omega(T) \geq \Phi_B(T)$ for $T \in (0,{\rm ISO}(B_1)^{1/p^\sharp})$, where ${\rm ISO}(B_1) = n |B_1|$ and $\Phi_\Omega(T)$ is the analogous minimization problem to \eqref{e: phi T} with $\Omega$ in place of $H$. The lower bound $\Phi_H(T)\geq \Phi_B(T)$ on this interval is not saturated (see Figure~\ref{fig1}); the question of whether other domains can have $\Phi_\Omega(T)=\Phi_B(T)$ was investigated in \cite{MNT}. Note that any open Lipschitz domain $\Omega$ enjoys the complementary upper bound $\Phi_\Omega(T) \leq \Phi_H(T).$

\subsection{Mass transportation background}


We briefly recall some facts from optimal transport theory, and refer the reader to \cite{VillaniBook, MaggiOTBook} for further introduction.
For a $\mu$ (Borel) probability measure on $\R^n$ and a Borel measurable map $\TM: \mathbb{R}^n \rightarrow \mathbb{R}^n$, the pushforward of $\mu$ through $T$ is the probability measure $\TM_\# \mu$ defined by
\begin{equation}
    \label{eqn: transport cond sets}
\TM_\# \mu(A)=\mu\left(\TM^{-1}(A)\right) \quad \text{ for all } A \subset \mathbb{R}^n .
\end{equation}
By approximation, this means that 
\begin{equation}  \label{eqn: transport cond}
 \int_{\mathbb{R}^n} \xi \, d \TM_\# \mu=\int_{\mathbb{R}^n} \xi \circ \TM \,d \mu
\end{equation}
for every Borel measurable function $\xi: \mathbb{R}^n \rightarrow[0, \infty]$.

Now suppose $\mu =F \mathcal{L}^n$ and $\nu =G  \mathcal{L}^n$ are absolutely continuous probability measures on $\mathbb{R}^n$. By the Brenier-McCann theorem \cite{Brenier91, McCann97} (see \cite[Cor. 2.30]{VillaniBook}), there is a 
convex function $\varphi: \mathbb{R}^n \rightarrow \mathbb{R} \cup\{+\infty\}$ such that the map $\TM = \nabla \varphi$ is defined $\mu$-a.e. and satisfies
$$\TM_\#\mu =\nu.$$
The map $\TM$ is uniquely determined $\mu$-a.e., and is called the Brenier map from $\mu$ to $\nu$. It is the unique optimal transport map for the quadratic cost, though we will not use this. 
If $\varphi$ is $C^2$, then \eqref{eqn: transport cond} and the area formula give 
\begin{equation}
    \label{eqn: MA}
F(x)=G(\nabla \varphi(x)) \operatorname{det} \nabla^2 \varphi(x) \qquad \mu\text{-a.e.};
\end{equation}
the same identity holds in general for  the Alexandrov Hessian $\nabla^2\varphi$, i.e. for the absolutely continuous part of the distributional Hessian \cite{McCann97}.
 \smallskip

A key property of the Brenier map is the cyclical monotonicity of its graph. A subset $\Gamma \subset \mathbb{R}^n \times \mathbb{R}^n$ is {\it cyclically monotone} if  
\[
\sum_{i=1}^m y_i \cdot (x_{i+1}- x_i) \leq 0
\]
for every $m \in \mathbb{N}$ and every collection of $m$ points $(x_1, y_1),\dots, (x_m,y_m)$ in $\Gamma$,
with the convention that $x_{m+1}=x_1$. 
There is a set $A$ of full $\mu$ measure (i.e. $\mu(A)=1$) such that the graph $\Gamma = \{(x,\TM(x)) : x \in A\}$ of the Brenier map is cyclically monotone, see, e.g., \cite[Prop. 2.24]{VillaniBook}\footnote{More generally, the support of the optimal plan between any two probability measures on $\R^n$ is cyclically monotone, and this fact is used in one proof of the Brenier-McCann theorem.}
Applying this fact for $m=2$ guarantees that 
\begin{equation}
    \label{eqn: cyclical monotonicity}
(\TM(x_1)-\TM(x_2))\cdot (x_1 -x_2)  \geq 0\, \quad \text{ for all }(x_1, x_2)\in A\times A .
\end{equation}

\subsection{Mass transportation argument}\label{ssec:OT proof}
Let us sketch the mass transportation proof of \eqref{eqn: trace Sobolev} given in \cite{MN}. It is a variant of the mass transportation arguments used to prove the Sobolev inequality \cite{CENV2004} and various other functional inequalities, and is especially inspired by Nazaret's proof of the Escobar inequality \cite{Nazaret2006}.
Fix $T>0$ and let $s=s_T \in \R$ be as in  section~\ref{ssec: extremals}. 
Direct computation verifies the identity
\begin{equation}
    \label{eqn: LHS OT arg}
p^{\sharp} \|\nabla U_T\|_{L^p(H)} Y_T+s T^{p^{\sharp}} =n \int_H U_{T}^{p^{\sharp}} d x\, \qquad\text{ where }\qquad Y_T= \Big(\int_H U_T^{p^{\star}}\left|x-s e_1\right|^{p/(p-1)} d x\Big)^{(p-1)/p} \,.
\end{equation}

Now, fix $u \in  \compset \cap C^1_c(\overline{H})$ with $u \geq 0$. We aim to show that
\begin{equation}\label{eqn: goal}
    \| \nabla u\|_{L^p(H)}\geq \|\nabla U_T\|_{L^p(H)}\,.
\end{equation}
Letting $F = u^{p^*}$ and $G= U_T^{p^*}$, consider the measures
\[
\mu = u^{p^*}\, \mathcal{L}^n= F\, \mathcal{L}^n \qquad \text{ and }\qquad \nu = U_T^{p^*} \, \mathcal{L}^n = G\, \mathcal{L}^n\,,
\]
and let $\TM = \nabla \varphi$ be the Brenier map from $\mu$ to $\nu.$ Applying the transport condition \eqref{eqn: transport cond}, the identity \eqref{eqn: MA}, and the arithmetic-geometric mean inequality to the nonnegative eigenvalues of the (Alexandrov) Hessian $D^2\varphi$,\footnote{As with \eqref{eqn: MA}, there is some subtlety when $\varphi$ is not $C^2$; in \eqref{eqn: OT arg 1}, AM--GM is applied $\mu$-a.e. to the eigenvalues of the Alexandrov Hessian, and  by convexity, the Alexandrov Laplacian of $\varphi$ is bounded above by its distributional Laplacian.} we find
\begin{equation}\label{eqn: OT arg 1}
\int_H U_T^{p^{\sharp}}=\int_{\mathbb{R}^n} G^{1-1 / n}=\int_{\mathbb{R}^n} G(\nabla \varphi)^{-1 / n} F=\int_{\mathbb{R}^n}\left(\operatorname{det} \nabla^2 \varphi\right)^{1 / n} F^{1-1 / n} \leq \frac{1}{n} \int_{\mathbb{R}^n} F^{1-1 / n} d(\operatorname{div} \TM)\,.
\end{equation}
On the right-hand side, we subtract the divergence-free vector field $se_1$ from $\TM$, letting $\SM=\TM-se_1$. Then, applying the divergence theorem, we obtain
\begin{equation}
    \label{eqn: OT arg 2}
\int_{\mathbb{R}^n} F^{1-1 / n} d(\operatorname{div} \TM)=\int_H u^{p^{\sharp}} d(\operatorname{div} \SM)=-p^{\sharp} \int_H u^{p^{\sharp}-1} \nabla u \cdot \SM d x-\int_{\partial H} u^{p^{\sharp}} \SM \cdot {e}_1 d \mathcal{H}^{n-1} .
\end{equation}
Since $\TM$ transports $\mu$ to $\nu$ and $\operatorname{spt}(\nu) \subset \overline{H}$, we have $\SM(x) \cdot\left(-{e}_1\right) \leq s$ for $\mathcal{H}^{n-1}$-a.e. $x \in \operatorname{spt}(\mu) \cap \partial H$. Thus, in summary, \eqref{eqn: OT arg 1} and \eqref{eqn: OT arg 2} yield
\begin{equation}\label{eqn: OT arg intermed}
n \int_H U_T^{p^{\sharp}}\,dx \leq-p^{\sharp} \int_H u^{p^{\sharp}-1} \nabla u \cdot\left(\TM-s e_1\right)\,dx +s\, T^{p^\sharp}\,.
\end{equation}
Now we bound the first term on the right-hand side. Using Cauchy-Schwarz and H\"{o}lder's inequalities and the transport condition \eqref{eqn: transport cond}, we find
\begin{align}
\label{eqn:CS} -p^{\sharp} \int_H u^{p^{\sharp}-1} \nabla u \cdot\left(\TM-s e_1\right)\,dx& \leq p^{\sharp} \int_H u^{p^{\sharp}-1} |\nabla u| \cdot |\TM-s e_1|\,dx\\
\nonumber& \leq p^{\sharp}\|\nabla u\|_{L^p(H)}\left(\int_H u^{p^{\star}}\left|\TM(x)-s e_1\right|^{p/(p-1)} d x\right)^{(p-1)/p} \\
\nonumber& =p^{\sharp}\|\nabla u\|_{L^p(H)}\left(\int_H U_T^{p^{\star}}\left|x-s e_1\right|^{p/(p-1)} d x\right)^{(p-1)/p} =p^{\sharp}\|\nabla u\|_{L^p(H)}\,Y_T.
\end{align}
Combining this with \eqref{eqn: LHS OT arg} and \eqref{eqn: OT arg intermed} shows that 
\begin{equation}
    \label{eqn: final}
p^{\sharp} \|\nabla U_T\|_{L^p(H)} Y_T+s T^{p^{\sharp}}
\leq p^{\sharp}\|\nabla u\|_{L^p(H)}\,Y_T + s T^{p^{\sharp}}
\end{equation}
which directly implies \eqref{eqn: goal}.

The inequality \eqref{eqn: final} sandwiches each individual inequality in the proof, including \eqref{eqn:CS}, so
\begin{equation*}
    \begin{split}
(\|\nabla u\|_{L^p(H)} - \|\nabla U_T\|_{L^p(H)})\, Y_T \geq 
&\int_H u^{p^{\sharp}-1}\Big( |\nabla u| \cdot |\TM-s e_1|
- (-\nabla u) \cdot\left(\TM-s e_1\right)\Big)\,dx\\
=\frac{1}{2}& \int_H u^{p^{\sharp}-1}|\nabla u|\left|\TM-s e_1\right|\left|\frac{-\nabla u}{|\nabla u|}-\frac{\left(\TM-s e_1\right)}{\left|\TM-s e_1\right|}\right|^2\,dx.
 \end{split}
\end{equation*}
Since $\|\nabla u\|_{L^p(H)} - \|\nabla U_T\|_{L^p(H)} \leq C\delta_T(u)$ with $C= 1/(p\Phi_H(T)^{p-1}),$
this means there is a constant $C_{n,p,T}>0$ such that 
\begin{equation}    \label{eqn: deficit and CS}
    C_{n,p,T}\delta_T(u) \geq \int_H u^{p^{\sharp}-1}|\nabla u|\left|\TM-s e_1\right|\left|\frac{-\nabla u}{|\nabla u|}-\frac{\left(\TM-s e_1\right)}{\left|\TM-s e_1\right|}\right|^2\,dx.
\end{equation}
In the proof of Theorem~\ref{thm: strict energy gap}, we will only use \eqref{eqn: deficit and CS} and the fact that $U_T$ is radially symmetric and decreasing about a point $se_1$.

\section{Proof of Theorem~\ref{thm: strict energy gap}}
In this section we prove Theorem~\ref{thm: strict energy gap}. As described in the introduction, the idea is to construct a function $w$, which is essentially the sum of a copy of $\vv_1 U_{T_1}$ centered at $2^{R+2}e_n$ and a copy of $\vv_2U_{T_2}$ centered at $-2^{R+2}e_n$ for $R\gg 1$, such that $\int|\nabla w|^p \leq  \vv_1^p \Phi_H\left(T_1\right)^p+\vv_2^p \Phi_H\left(T_2\right)^p + \varepsilon$. We are able to estimate the corresponding Brenier map $\TM$ in a somewhat explicit manner. In particular, the cyclical monotonicity of the graph of $\TM$ in the form \eqref{eqn: cyclical monotonicity} shows that  $\TM$ maps most points in the support of the translated copy of $\vv_1 U_{T_1}$ to $\{y\cdot e_n>0\}$, forcing a definite lower bound for the right-hand side of the estimate 
\eqref{eqn: deficit and CS} and thus showing $\delta_T(w) \geq 2\varepsilon$. 
\begin{proof}[Proof of Theorem~\ref{thm: strict energy gap}]
{\it Step 1:} We begin by fixing parameters and notation. 
Without loss of generality assume $\vv_1 \leq \vv_2$. As above, let $T_1={\ttt_1}/{\vv_1}\geq 0$ and $T_2={\ttt_2}/{\vv_2}\geq 0$. 
We will use the convention that if $T_i=0$, then $U_{T_i}=U_0$, where we let $U_0 = U_S/\|U_S\|_{L^{p^*}(\R^n)}$ for the standard Talenti bubble $U_S$ defined in \eqref{eqn: Sobolev minimizers}.
From the explicit form of $U_{T_1}$ (recall \eqref{eqn: Sobolev minimizers}-\eqref{eqn: BE family}), it is not difficult to see that there exists $\bar{c}=\bar{c}(n, p,T_1)>0$ small enough such that the set
\begin{equation}\label{eqn: G def}
\G=\left\{x \in H\ : \ U_{T_1}(x) \geq \bar{c}\,\left\|U_{T_1}\right\|_{L^\infty(H)} , \quad \left|\nabla U_{T_1}(x)\right| \geq \bar{c}, \quad\mfrac{\nabla U_{T_1}(x) \cdot e_n}{|\nabla U_{T_1}(x)|} \geq \bar{c}\, \right\}
\end{equation}
is nonempty.  Note that $\G \subset B_\rho$ for some $\rho=\rho(n,T_1)>0$. We define $\bar{a}=\bar{a}(n,T_1, \vv_1)$ by
\begin{equation}
    \label{eqn: a def}
3 \bar{a}:=\vv_1^{p^*} \int_\G U_{T_1}^{p^*}>0 .
\end{equation}
Let $R= R(n, T, \vv_1, \vv_2, \ttt_1,\ttt_2)\geq 2\rho >0$ be a large fixed number to be specified later in the proof. Let
\begin{equation}\label{eqn: flat cone}
\tK=\left\{z \in \mathbb{R}^n:\left|z_n\right|<\mfrac{R}{2^R}\left|z^{\prime}\right|\right\}  \,. 
\end{equation}
Here $z'$ denotes the projection of $z$ onto $\R^{n-1}\times\{0\} \subset \R^n$. Since $|\tK \cap B_r| = o_R(1)$ for any fixed $r>0$, we can take $R$ large enough so that 
\[
\bar{b}:=\int_{\tK\cap H}U_T^{p^*}\,dx \leq \bar{a}\,.
\]
Here and below, $o_R(1)$ is a number whose absolute value can be made arbitrarily small by taking $R$ sufficiently large.
\\

{\it Step 2:} Next we construct the main function $w =w_R \in \compset$. For simplicity we first assume $T_i>0$ for both $i=1,2$. Let $\eta: \R^n \to \R$ be a smooth nonnegative cutoff function  supported in $B_1$ with $0\leq \eta\leq 1$ on $\R^n$ and $\eta=1$ in $B_{1/2}$. The function ${W}_1(x)=\vv_1 U_{T_1}(x) \eta(\frac{x}{R})$ satisfies
\begin{align*}
\int_H {W}_1^{p^*}\,dx & =\vv_1^{p^*}+o_R(1), \qquad \int_{\partial H} {W}_1^{p^{\sharp}}\,d\mathcal{H}^{n-1} =\vv_1^{p^\sharp} T_1^{p^{\sharp}} +o_R(1), \qquad
\int_H\left|\nabla {W}_1\right|^p \,dx =\vv_1^p \Phi_H\left(T_1\right)^p+o_R(1)\,.
\end{align*}
The analogous estimates hold for ${W}_2(x)=\vv_2 U_{T_2}(x) \eta(\frac{x}{R})$. 
By construction, we have $\int_H {W}_i^{p^*}\,dx \leq \vv_i^{p^*}$ and $ \int_{\partial H} {W}_i^{p^\sharp} \, d\mathcal{H}^{n-1}\leq (\vv_iT_i)^{p^\sharp}=\ttt_i^{p^\sharp}$ for $i=1,2$. So, we may choose a nonnegative smooth function $\psi_R: H \to \R$ supported in $B_R$ so that 
\begin{equation}
    \label{eqn: w def}
w(x)={W}_1\left(x-2^{R+2} e_n\right)+\left({W}_2\left(x+2^{R+2} e_n\right)+\psi_R\left(x+2^{R+2} e_n\right)\right)
\end{equation}
lies in $\compset$ and satisfies 
\begin{equation}\label{eqn: w energy}
\int_H|\nabla w|^p\,dx = \vv_1^p \Phi_H\left(T_1\right)^p+\vv_2^p \Phi_H\left(T_2\right)^p + o_R(1)\,.
\end{equation}
The support of $w$ is contained in $B_R^{+} \cup B_R^{-}$ where we let
\begin{equation}
    \label{eqn:BR}
B_R^{+}=B_R\left(2^{R+2} e_n\right) \cap \overline{H}, \qquad B_R^{-}=B_R\left(-2^{R+2} e_n\right)\cap \overline{H} .
\end{equation}
Recalling that $\psi_R\ge 0$ and $\vv_1\leq \vv_2$, we have
\begin{equation}\label{eqn: half}
  \int_{B_{R}^-} w^{p^*} \,dx \geq \frac{1}{2} .  
\end{equation}

In the case that $T_i=0$ for either $i=1$ or $2$, the construction is slightly modified. For this $i$ only, instead define the function $W_i$ by $W_i(x) = \vv_i U_{0}(x-Re_1) \eta(\frac{x-Re_1}{R})$. Then let $w$ be defined as in \eqref{eqn: w def}. Once more $w\in \compset$ and satisfies \eqref{eqn: w energy}. If $T_1=0$, define the set $B_R^{+}$ by $B_R(2^{R+2}e_n +Re_1)\cap\overline{H}$ and let $B_R^-$ be as 
in \eqref{eqn:BR}. If instead $T_2=0$, let $B_R^+$ be as in \eqref{eqn:BR} and set $B_R^-=(B_R(-2^{R+2}e_n) \cup B_R(-2^{R+2}e_n +Re_1))\cap\overline{H}$. Then once more in this case, $w$ is supported in $B_R^+\cup B_R^-$ and \eqref{eqn: half} holds.

{\it Step 3:} Let $\mu=w^{p^*}\mathcal{L}^n$ and $\nu=U_T^{p^*} \mathcal{L}^n$ and let $\TM$ be the Brenier map from $\mu$ to $\nu$.
Consider the sets 
\begin{align}
E&=\left\{x \in B_R^{+}: \TM(x) \cdot e_n<0\right\}=\TM^{-1}\left(\left\{y_n<0\right\}\right) \cap B_R^{+},\\
\label{eqn: F} F&= \left\{x \in B_R^- : \TM(x) \cdot e_n \geq 0\right\} =  \TM^{-1}\left(\{y_n \geq 0\}\right) \cap B_R^{-}
\end{align}
of points in $B_R^\pm$ that get mapped across the plane $\{y_n=0\}$ by $\TM$.
We claim that
\begin{equation}\label{eqn: mu E}
    \mu(E) \leq \bar{b}.
\end{equation}
As above $\bar{b}=\int_{\tK\cap H}U_T^{p^*}\,dx = \nu(\tK)$.  Suppose not, i.e. $\mu(E)>\bar{b}$. From the symmetry of $U_T$ and the transport condition \eqref{eqn: transport cond sets}, we have 
\begin{align*}
\frac{1}{2}=\nu\left(\left\{y_n<0\right\}\right) & =\mu\left(\TM^{-1}\left(\left\{y_n<0\right\}\right)\right) \\
& =\mu\left(\TM^{-1}(\left\{y_n<0\right\}) \cap B_R^{-}\right)+\mu(E) > \mu\left(\TM^{-1}\left(\left\{y_n<0\right\}\right) \cap B_R^{-}\right)+ \bar{b}\,.
\end{align*}
That is,
 $\mu(\TM^{-1}(\{y_n<0\}) \cap B_R^{-}) < \frac{1}{2}-\bar{b}$. 
Since $\mu\left(B_R^{-}\right) \geq\frac{1}{2}$ by \eqref{eqn: half}, this means the set $F$ defined in \eqref{eqn: F} has $
\mu(F) > \bar{b}$ as well.
Now,  with $\tK$ as in \eqref{eqn: flat cone}, let
\begin{align*}
F_*& =\{ x \in B_R^{-}:  \TM(x) \in \{y\cdot e_n>0\} \setminus \tK\} \subset F\,,\\
E_* &=\{x \in B_R^{+}: \TM(x) \in\{y\cdot e_n<0\} \setminus \tK\} \subset E\,
\end{align*}
be the sets of points in $B_R^\pm$ mapping even further to the ``wrong side'' of the plane $\{y_n=0\}$. Since $F \setminus F_*\subset \TM^{-1}(\{y_n \geq 0\} \cap \tK)$, we have
$$
\mu(F \setminus F_*) \leq \mu\big(\TM^{-1}(\{y_n \geq 0\} \cap \tK)\big)=\nu(\{y_n \geq 0\} \cap \tK)=\frac{\bar{b}}{2}\,.
$$
The final identity comes from the reflection symmetry of $\tK$ and $U_T$ across $\{x_n=0\}$. Since $\mu(F)=\mu\left(F_*\right)+\mu\left(F \setminus F_*\right)$, we find $
\mu\left(F_*\right) \geq \frac{\bar{b}}{2}$, and analogous reasoning shows $\mu\left(E_*\right)>\frac{\bar{b}}{2}$. In particular, letting $A$ be the full $\mu$-measure set on which \eqref{eqn: cyclical monotonicity} holds, the sets $E_*\cap A$ and $F_*\cap A$ are nonempty.

Now, take any $x_1 \in E_*\cap A \subset B^{+}_R$ and $x_2 \in F_*\cap A \subset B^{-}_R$. Then by construction $x_1-x_2$ lies in the positive cone
$$
\begin{aligned}
\K & =\Big\{z \in \mathbb{R}^n : z_n>\mfrac{2^R}{R}\left|z^{\prime}\right|\Big\};
\end{aligned}
$$
note that this holds even in the case when one of the $T_i$ is zero.
Observe that  $z \cdot y <0$ for any $z \in \K$ 
and $y \in\left\{y_n<0\right\} \setminus \tK$.
 So, by cyclical monotonicity \eqref{eqn: cyclical monotonicity},
\begin{equation}\label{eqn: CM consequence}
 \TM(x_1)-\TM(x_2) \notin\left\{y_n<0\right\} \setminus \tK\,.
\end{equation}
On the other hand, from the definition of $E_*$ we have $\TM(x_1) \in\{y_n<0\} \setminus \tK$. Similarly, from the definition of $F_*$, we have
$\TM(x_2)\in \{y_n>0\} \setminus \tK$ and so $-\TM(x_2) \in\left\{y_n<0\right\} \setminus \tK$. Since $\{y_n<0 \}\setminus \tK$ is a convex cone, 
\[\TM(x_1)-\TM(x_2) \in\{y_n<0\} \setminus \tK.
\]
This contradicts \eqref{eqn: CM consequence}. Thus \eqref{eqn: mu E} holds.\\

{\it Step 4:}
Since $\TM$ is a transport map and $U_T$ is bounded, for any $\varepsilon>0$,
\begin{equation}\label{eqn: excise ball}
\mu\left(\left\{x:\left|\TM(x)-se_1\right|<\varepsilon\right\}\right) 
=\nu(B(se_1, \varepsilon)) \leq C \varepsilon^n
\end{equation}
where $C= C(n,p,T)$.
Choose $\varepsilon>0$ small enough so $C \varepsilon^n \leq \bar{a}$ with $\bar{a}$ as in \eqref{eqn: a def}. (In the case $s<0$ we can choose $\varepsilon$ so that $\nu(B(se_1, \varepsilon))=0$.) Note that since $R\geq 2\rho$, if $T_1>0,$ then for $\G$ as in \eqref{eqn: G def},
$$\mu(\G +2^{R+2} e_n)=\int_{\G+2^{R+2} e_n}w^{p^*} dx
=\int_\G\left(\vv_1 U_{T_1}\right)^{p^*}=3 \bar{a}.$$
If $T_1=0,$ the same conclusion holds with $\G+2^{R+2} e_n + Re_1$ in place of $\G+2^{R+2} e_n$.
So, thanks to \eqref{eqn: excise ball}, \eqref{eqn: mu E}, and $\bar{b} \leq \bar{a}$, we have  $\mu(\G_*) \geq \bar{a}$. Here we let
$$
\G_*=\big( \G+ 2^{R+2} e_n\big) \setminus
\big(\{x:|\TM(x)-s e_1|<\varepsilon\} \cup\{x: \TM(x) \cdot e_n<0\}\big) \,
$$
when $T_1>0$, and when $T_1=0$, we define $\G_*$ identically except with $\G+ 2^{R+2} e_n$ replaced by $\G+ 2^{R+2} e_n + Re_1$.
From the definition of $\G$ in \eqref{eqn: G def}, we thus have $|\frac{-\nabla w}{|\nabla w|}-\frac{\TM-s e_1}{\left|\TM-s e_1\right|}|^2 \geq \bar{c}^2$ on $\G_*$. So, recalling \eqref{eqn: deficit and CS} and noting that $p^\sharp-1=p^*-\frac{n}{n-p}$,  we have
\begin{equation}
    \label{eqn: deficit big}
    \begin{split}
C_{n,p,T}  \delta_T(w) & \geq \int_{\G_*} w^{p^{\sharp}-1}|\nabla w| \cdot\left|\TM-s e_1\right|\left|\frac{-\nabla w}{|\nabla w|}-\frac{\TM-s e_1}{\left|\TM-s e_1\right|}\right|^2\,dx \\
&= \int_{\G_*} w^{-{n}/({n-p})}|\nabla w| \cdot\left|\TM-s e_1\right|\left|\frac{-\nabla w}{|\nabla w|}-\frac{\TM-s e_1}{\left|\TM-s e_1\right|}\right|^2\,d\mu \\
& \geq \left\|U_{T_1}\right\|_{L^{\infty}(H)}^{-n/(n-p)} \cdot \bar{c} \cdot \varepsilon \int_{\G_*}\left|\frac{-\nabla w}{|\nabla w|}-\frac{\TM-s e_1}{\left|\TM-s e_1\right|}\right|^2\,d\mu  \geq \bar{a} \bar{c}^{3}\left\|U_{T_1}\right\|_{L^\infty(H)}^{-n/(n-p)} \varepsilon=:2c_0 .      
    \end{split}
\end{equation}
Up to possibly further increasing $R$ depending on $n,p,T_1,\bar{a}, \bar{c}$ and $\varepsilon$, and thus on $n,p, T, \vv_1,\vv_2, \ttt_1,\ttt_2$, in \eqref{eqn: w energy} we may take the error $o_R(1)$ to be at most $c_0$, so absorbing it yields
\[
\vv_1^p \Phi_H\left(T_1\right)^p+\vv_2^p \Phi_H\left(T_2\right)^p  - \Phi_H(T)^p \geq c_0.
\]
This completes the proof.
\end{proof}

\section{Proofs of Theorem~\ref{thm: qual stability} and Corollary~\ref{cor: quant stability}}
The strict binding inequality Theorem~\ref{thm: strict energy gap} is the main tool toward proving Theorem~\ref{thm: qual stability}. To complete the proof of Theorem~\ref{thm: qual stability}, let us recall the first and second concentration-compactness lemmas in the present setting.
\begin{lemma}[Concentration-Compactness Lemma I]
    \label{lem: CC1}
    Let $\{\nu_k\}_k$ be a sequence of probability measures on $\mathbb{R}^n$. There is a subsequence (unrelabeled) such that one of the following three conditions holds:
    \begin{enumerate}
        \item (Compactness) There is a sequence $\{x_k\} \subset \R^n$ such that for any $\varepsilon>0$, there is a radius $R>0$ such that $\nu_k (B_R(x_k)) \geq 1-\varepsilon $ for all $k$.
        \item (Vanishing) For all $R>0$, $\lim_{k\to \infty} (\sup_{x \in \R^n} \nu_k(B_R(x)))=0$.
        \item (Dichotomy) There is a number $\lambda \in (0,1)$ such that for all $\varepsilon>0$, there exist $R>0$ and $\{x_k\}\subset \R^n$ such that for all $R'>R,$
        \begin{align*}
     \limsup_{k\to \infty} \Big( |\lambda - \nu_k^1(\R^n)| + |(1-\lambda) -\nu_k^2(\R^n)|\Big) \leq \varepsilon
            \end{align*}
            for the measures 
  \[
  \nu_k^1 = \nu_k \llcorner B_{R'}(x_k) \qquad \text{ and } \qquad \nu_k^2 = \nu_k \llcorner \R^n \setminus B_{8R'}(x_k)\,.
\]
    \end{enumerate}
\end{lemma}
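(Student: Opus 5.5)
This is Lions' first concentration-compactness lemma, and I would prove it in the standard way using the Lévy concentration function
\[
Q_k(R) = \sup_{x \in \R^n} \nu_k(B_R(x)).
\]
For each $k$, the map $R\mapsto Q_k(R)$ is nondecreasing, takes values in $[0,1]$, and tends to $1$ as $R\to\infty$ because $\nu_k$ is a probability measure. By Helly's selection theorem (a diagonal argument over rational $R$, then monotone extension), I pass to a subsequence along which $Q_k(R)\to Q(R)$ for every $R>0$. Here $Q$ is nondecreasing, so the limit $\lambda := \lim_{R\to\infty} Q(R)$ exists and lies in $[0,1]$. The trichotomy is then read off from the value of $\lambda$.

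\emph{Case $\lambda=0$.} Then $Q\equiv 0$, so $Q_k(R)\to 0$ for every $R$. This is exactly vanishing.

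\emph{Case $\lambda=1$.} Fix $R_0$ with $Q(R_0)>1/2$. For large $k$, choose $x_k$ with $\nu_k(B_{R_0}(x_k))>1/2$. Given $\varepsilon<1/2$, choose $R_\varepsilon$ with $Q(R_\varepsilon)>1-\varepsilon$, and for large $k$ choose $y_k$ with $\nu_k(B_{R_\varepsilon}(y_k))>1-\varepsilon$. These two balls each carry more than half the mass, so they must intersect, which gives $|x_k-y_k|\le R_0+R_\varepsilon$. Hence $\nu_k(B_{R_0+2R_\varepsilon}(x_k))\ge 1-\varepsilon$ for all large $k$. The finitely many remaining indices are handled by enlarging the radius, using tightness of each single probability measure. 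This gives compactness.

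\emph{Case $\lambda\in(0,1)$.} Given $\varepsilon>0$, pick $R$ with $Q(R)>\lambda-\varepsilon$, and for large $k$ pick $x_k$ with $\nu_k(B_R(x_k))\ge Q_k(R)-1/k$. For any $R'>R$ I get
\[
\lambda-\varepsilon \le \liminf_k \nu_k(B_{R'}(x_k)) \le \limsup_k \nu_k(B_{8R'}(x_k)) \le \limsup_k Q_k(8R') = Q(8R') \le \lambda.
\]
This yields $|\lambda-\nu_k^1(\R^n)|\lesssim\varepsilon$, and also shows the annulus $B_{8R'}\setminus B_{R'}$ has mass at most about $\varepsilon$. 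Therefore $\nu_k^2(\R^n) = 1-\nu_k(B_{8R'}(x_k))$ is within about $\varepsilon$ of $1-\lambda$ in the limsup. This is dichotomy, after renaming $\varepsilon$ by a constant multiple.

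The main obstacle is the subtlety in the dichotomy case that the statement asks the bounds to hold for \emph{all} $R'>R$ with a single sequence $\{x_k\}$. The bound on the $R'$-ball from below is automatic, since $B_{R'}\supset B_R$. The bound on the $8R'$-ball from above uses $Q(8R')\le\lambda$ for every $R'$. So the argument does give uniformity in $R'$ with only a limsup in $k$, which is exactly what is claimed.
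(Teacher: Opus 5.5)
Your proof is correct, and it is the standard Lévy concentration-function argument of Lions (see \cite[Lemma I.1]{CCLocallyCompact} or \cite[Lemma 4.3]{Struwe}); the paper does not reproduce a proof but cites that argument, noting that the explicit splitting measures are already contained in it. Your treatment of uniformity in $R'>R$ is what justifies the paper's $B_{R'}$/$B_{8R'}$ formulation: the lower bound comes from $\nu_k(B_{R'}(x_k))\ge\nu_k(B_R(x_k))$, and the upper bound from $\limsup_k \nu_k(B_{8R'}(x_k))\le Q(8R')\le\lambda$.
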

Compared to the usual statement (see \cite[Lemma I.1]{CCLocallyCompact} or \cite[Lemma 4.3]{Struwe}) the statement of Lemma~\ref{lem: CC1},
 we spell out the splitting measures $\nu_k^{1},\nu_k^{2}$  in the dichotomy alternative more explicitly. The statement given above is already present in the proof of the standard statement.

\begin{lemma}[Concentration-Compactness Lemma II]\label{lem: CC2} Let $n \geq 2$ and $ p \in(1, n)$. 
If $\left\{u_k\right\}_k$ is a sequence in $L_{\text {loc }}^1(H),\left\{\nabla u_k\right\}_k$ is bounded in $L^p\left(H ; \mathbb{R}^n\right)$ and $u_k \rightharpoonup u$ as distributions in $H$, then the Radon measures on $\overline{H}$ defined by
$$
\mu_k=\left|\nabla u_k\right|^p \mathcal{L}^n\llcorner H, \quad \nu_k=\left|u_k\right|^{p^{\star}} \mathcal{L}^n\llcorner H, \quad \tau_k=\left|u_k\right|^{p^{\sharp}} \mathcal{H}^{n-1}\llcorner\partial H
$$
have subsequential weak-star limits $\mu, \nu$ and $\tau$ which satisfy

$$
\begin{aligned}
\nu & =|u|^{p^{\star}} \mathcal{L}^n\llcorner H+\sum_{i \in I} \vv_i^{p^{\star}} \delta_{x_i}, \\
\tau & =|u|^{p^{\sharp}} \mathcal{H}^{n-1}\llcorner\partial H+\sum_{i \in I} \ttt_i^{p^{\sharp}} \delta_{x_i}, \\
\mu & \geq|\nabla u|^p \mathcal{L}^n\llcorner H+\sum_{i \in I} \mathrm{~g}_i^p \delta_{x_i},
\end{aligned}
$$
where $\left\{x_i\right\}_{i \in I} \subset \bar{H}$ is an at most countable set, $\vv_i>0$ and $\ttt_i \geq 0$ for every $i \in I$, with $\ttt_i>0$ only if $x_i \in \partial H$, and

$$
\mathrm{g}_i \geq \vv_i \Phi_H\left(\frac{\ttt_i}{\vv_i}\right), \quad \forall i \in I\,.
$$
In particular, $\mathrm{g}_i \geq S_{n,p} \vv_i$ whenever $x_i \in H$.
\end{lemma}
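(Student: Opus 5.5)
The plan is to follow Lions' classical proof of the second concentration-compactness lemma, adapted to the half space. The one new ingredient is the homogeneous form of \eqref{eqn: trace Sobolev}. If $f\in\dot W^{1,p}(H)$ has $m=\|f\|_{L^{p^*}(H)}>0$ and $t=\|f\|_{L^{p^\sharp}(\partial H)}$, then $f/m\in\mathcal{A}_{t/m}$, so
\[
\|\nabla f\|_{L^p(H)}\geq m\,\Phi_H(t/m).
\]
Combined with the Escobar inequality \eqref{eqn: escobar} and with \eqref{eqn: gradient domain}, this gives the local reverse-H\"older inequalities that force atomic concentration.

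\textbf{Step 1: split off the weak limit.} Since $\nabla u_k$ is bounded in $L^p$, Rellich's theorem gives $u_k\to u$ in $L^q_{\rm loc}(\overline H)$ for every $q<p^*$, along a subsequence. Compactness of the trace on bounded pieces of $\partial H$ gives $u_k\to u$ in $L^q_{\rm loc}(\partial H)$ for every $q<p^\sharp$, and we may also assume pointwise a.e.\ convergence. Set $v_k=u_k-u$. By the Brezis--Lieb lemma, $|v_k|^{p^*}\mathcal L^n\llcorner H$ converges weakly-$*$ to $\tilde\nu:=\nu-|u|^{p^*}\mathcal L^n\llcorner H$, and $|v_k|^{p^\sharp}\mathcal H^{n-1}\llcorner\partial H$ converges to $\tilde\tau:=\tau-|u|^{p^\sharp}\mathcal H^{n-1}\llcorner\partial H$. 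Let $\tilde\mu$ be a weak-$*$ limit of $|\nabla v_k|^p\mathcal L^n$.

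For $\varphi\in C^\infty_c(\R^n)$ we have $\nabla(\varphi v_k)=\varphi\nabla v_k+o(1)$ in $L^p$, because $v_k\to0$ in $L^p_{\rm loc}$. Applying \eqref{eqn: gradient domain} and \eqref{eqn: escobar} to $\varphi v_k$ and letting $k\to\infty$ gives
\[
\Big(\int|\varphi|^{p^*}d\tilde\nu\Big)^{1/p^*}\leq C\Big(\int|\varphi|^p d\tilde\mu\Big)^{1/p},\qquad \Big(\int|\varphi|^{p^\sharp}d\tilde\tau\Big)^{1/p^\sharp}\leq C\Big(\int|\varphi|^p d\tilde\mu\Big)^{1/p}.
\]
Since $p^*,p^\sharp>p$, Lions' standard measure-theoretic lemma shows that $\tilde\nu$ and $\tilde\tau$ are purely atomic. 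Their atoms form a countable set $\{x_i\}$ contained in the set of atoms of $\tilde\mu$. Atoms of $\tilde\tau$ lie on $\partial H$, so $\ttt_i>0$ forces $x_i\in\partial H$. This yields the representations of $\nu$ and $\tau$.

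\textbf{Step 2: the gradient measure.} Weak lower semicontinuity gives $\mu\geq|\nabla u|^p\mathcal L^n\llcorner H$. Moreover $\mu(\{x_i\})\geq\tilde\mu(\{x_i\})$, since the cross terms between $\nabla u$ and $\nabla v_k$ vanish on shrinking balls around $x_i$ by absolute continuity of $|\nabla u|^p$. The absolutely continuous part and the atoms are mutually singular, so it suffices to bound each atom: setting $\mathrm g_i^p:=\mu(\{x_i\})$, we need $\mathrm g_i\geq \vv_i\Phi_H(\ttt_i/\vv_i)$.

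To get this, take cutoffs $\varphi_\epsilon$ equal to $1$ near $x_i$ and supported in $B_\epsilon(x_i)$, and apply the homogeneous inequality to $f=\varphi_\epsilon u_k$. Then $m_{k,\epsilon}\to m_\epsilon$ and $t_{k,\epsilon}\to t_\epsilon$ as $k\to\infty$, and $(m_\epsilon,t_\epsilon)\to(\vv_i,\ttt_i)$ as $\epsilon\to0$. Also $\limsup_k\|\nabla(\varphi_\epsilon u_k)\|_p^p\leq\mu(\overline{B_\epsilon(x_i)})+o_\epsilon(1)$, using that $u_k\nabla\varphi_\epsilon$ is small in $L^p$ by H\"older and the convergence of Step 1. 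For interior atoms one may take $\epsilon<{\rm dist}(x_i,\partial H)$, so $t_\epsilon=0$ and the bound becomes $\mathrm g_i\geq S_{n,p}\vv_i$, using $\Phi_H(0)=S_{n,p}$.

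\textbf{Main obstacle.} Passing to the limit requires lower semicontinuity of $(m,t)\mapsto m\,\Phi_H(t/m)$ at $(\vv_i,\ttt_i)$ with $\vv_i>0$. I would get this from continuity of $\Phi_H$ on $[0,\infty)$, which follows from the explicit minimizers and the monotonicity and convexity properties recalled from \cite{MN}, or more directly from the fact that $\Phi_H$ is an infimum of continuous functionals of the constraint values.

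A second subtlety is that $\tilde\tau$ may have an atom at a point where $\tilde\nu$ has none, i.e.\ $\vv_i=0<\ttt_i$. There I would instead use \eqref{eqn: escobar}, giving $\mathrm g_i\geq E_{n,p}\ttt_i$. This is consistent with the limit $m\Phi_H(t/m)\to E_{n,p}t$ as $m\to0$, which follows from $\Phi_H(T)\sim E_{n,p}T$ near infinity in the Escobar regime. Such points would be indexed separately from the set $I$ in the statement, which requires $\vv_i>0$.
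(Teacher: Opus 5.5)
Your route is the paper's own: it cites \cite[Lemma 2.1]{MNT}, which is Lions' argument with \eqref{eqn: gradient domain} and \eqref{eqn: escobar} as the localized inequalities. Steps 1 and 2 correctly give the representation of $\nu$ and the bound $\mathrm{g}_i\ge \vv_i\Phi_H(\ttt_i/\vv_i)$ at every atom with $\vv_i>0$.

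There is, however, a genuine gap in how you treat the atoms of $\tau$. The statement puts the atoms of $\nu$ and of $\tau$ on a single index set $I$ with $\vv_i>0$, so it asserts that $\tau$ can only concentrate where $\nu$ does. Your Step 1 does not show this. The reverse H\"older inequality from \eqref{eqn: escobar} relates $\tilde\tau$ only to $\tilde\mu$, so Lions' lemma places the atoms of $\tilde\tau$ among the atoms of $\tilde\mu$, not of $\tilde\nu$. Your ``second subtlety'' then explicitly allows atoms with $\vv_i=0<\ttt_i$ and indexes them separately, which proves a weaker statement than the one claimed.

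The justification you give is also false: $\Phi_H$ is not asymptotically linear. By \eqref{eqn: asymptotic}, $\Phi_H(T)>T^{p^\sharp}/p^\sharp$ with $p^\sharp>1$, and this is the sharp behaviour as $T\to\infty$. The Escobar line $E_{n,p}T$ touches the graph of $\Phi_H$ only at $T=T_E$. Consequently $m\,\Phi_H(t/m)\ge t^{p^\sharp}/(p^\sharp m^{p^\sharp-1})\to\infty$ as $m\to 0$ with $t>0$ fixed.

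This superlinear growth is exactly what closes the gap. Use the divergence-theorem inequality
\[
\|f\|_{L^{p^\sharp}(\partial H)}^{p^\sharp}\le p^\sharp\,\|\nabla f\|_{L^p(H)}\,\|f\|_{L^{p^*}(H)}^{p^\sharp-1},
\]
which is the homogeneous form of \eqref{eqn: asymptotic} and is the same estimate as in Lemma~\ref{lem: small trace}. Apply it to $f=\varphi_\epsilon u_k$ and take the limits you already computed in Step 2, first $k\to\infty$ and then $\epsilon\to 0$. This gives
\[
\tau(\{x\})\le p^\sharp\,\mu(\{x\})^{1/p}\,\nu(\{x\})^{(p^\sharp-1)/p^*}\qquad\text{for every } x\in\partial H .
\]
Since $\mu$ is a finite measure, $\nu(\{x\})=0$ forces $\tau(\{x\})=0$. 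So the case $\vv_i=0<\ttt_i$ never occurs, and the common index set in the statement is justified.

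This point is not cosmetic. At the end of the proof of Theorem~\ref{thm: qual stability}, the paper uses exactly this feature of the lemma to pass from ``$\nu$ has no atoms'' to ``$\tau=|u|^{p^\sharp}\mathcal{H}^{n-1}\llcorner\partial H$''.

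A smaller remark: you need lower semicontinuity of $(m,t)\mapsto m\,\Phi_H(t/m)$. This does not follow from $\Phi_H$ being an infimum of continuous functionals, which only gives upper semicontinuity. Rely instead on the continuity of $\Phi_H$ established via \cite{MN}, which is also what the paper uses.
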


This form of the second concentration-compactness lemma, accounting for the trace term, was shown in \cite[Lemma 2.1]{MNT}. Its proof is a basic adaptation of the classical version on $\R^n$, see \cite[Lemma I.1]{lions1985} or \cite[Lemma 4.8]{Struwe}.
 In \cite{MNT}, the lemma is stated on an open bounded domain $\Omega$ with $C^1$ boundary rather than on $H$, but the boundedness of the domain is not used in the proof; the only modification is to replace inequalities (A.2) and (A.3) there by the inequalities \eqref{eqn: gradient domain} and \eqref{eqn: escobar} respectively.

Finally, in the proof of Theorem~\ref{thm: qual stability}, we will also use the following simple lemma.
\begin{lemma}\label{lem: small trace}
    Fix $n\geq 2$ and $p \in (1,n)$. There is a constant $C=C(n,p)$ such that the following holds.
Let $\varepsilon>0$, $R>0$ and $x_0 \in \R^n$, and suppose $u \in \dot{W}^{1,p}(H)$ has $\|u\|_{L^{p^*}(H)}=1$ and $ \|u\|_{L^{p^*}(H\cap B_{8R}(x_0)\setminus B_R(x_0))} \leq \varepsilon.$   Then
\[
\int_{\partial H \cap (B_{7R}(x_0) \setminus B_{2R}(x_0))} \! \! \!|u|^{p^\sharp} \, d\mathcal{H}^{n-1} \leq C(\varepsilon+\| \nabla u\|_{L^p(H)}) \varepsilon^{p^\sharp -1}.
\]
In particular, if $\|\nabla u\|_{L^p(H)}\leq \tilde{C}$, the right-hand side tends to zero as $\varepsilon \to 0$.
\end{lemma}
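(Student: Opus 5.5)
The plan is to bound the boundary integral by a trace estimate applied to a cut-off of a power of $|u|$, localized to the annulus where $u$ has small $L^{p^*}$ mass.

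\textbf{Setup.} Fix a smooth cutoff $\zeta$ with the following properties:
\begin{itemize}
\item $0\le\zeta\le1$, and $\zeta=1$ on $B_{7R}(x_0)\setminus B_{2R}(x_0)$;
\item $\zeta$ is supported in $B_{8R}(x_0)\setminus B_R(x_0)$;
\item $|\nabla\zeta|\le C/R$.
\end{itemize}
Set $v=\zeta|u|^{p^\sharp}$, which lies in $W^{1,1}$ with bounded support. The quantity to estimate is
\[
\int_{\partial H\cap(B_{7R}\setminus B_{2R})}|u|^{p^\sharp}\le\int_{\partial H}v .
\]

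\textbf{Step 1: divergence theorem.} Apply the divergence theorem to the vector field $v\,e_1$ on $H$, as in the ``simple divergence theorem computation'' behind \eqref{eqn: asymptotic}. Since the outer normal to $\partial H$ is $-e_1$, this gives $\int_{\partial H}v\le\int_H|\nabla v|$. Expanding the gradient,
\[
\int_{\partial H} v\le p^\sharp\int_H \zeta|u|^{p^\sharp-1}|\nabla u|+\int_H|u|^{p^\sharp}|\nabla\zeta| .
\]

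\textbf{Step 2: the gradient term.} Apply H\"older with exponents $p$ and $p'=p/(p-1)$. Note that $(p^\sharp-1)p'=p^*$, because
\[
p^\sharp-1=\frac{(n-1)p-(n-p)}{n-p}=\frac{n(p-1)}{n-p}=\frac{p^*}{p'} .
\]
Since $\zeta$ is supported in the annulus $A:=H\cap B_{8R}(x_0)\setminus B_R(x_0)$, we obtain
\[
\int_H \zeta|u|^{p^\sharp-1}|\nabla u|\le\|\nabla u\|_{L^p(H)}\,\|u\|_{L^{p^*}(A)}^{p^*/p'}\le\|\nabla u\|_{L^p(H)}\,\varepsilon^{p^\sharp-1}.
\]

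\textbf{Step 3: the cutoff term.} Apply H\"older with exponents $p^*/p^\sharp=n/(n-1)$ and $n$, using $|\nabla\zeta|\le C/R$ on a set of measure at most $CR^n$. This gives
\[
\int_H|u|^{p^\sharp}|\nabla\zeta|\le\|u\|_{L^{p^*}(A)}^{p^\sharp}\,\|\nabla\zeta\|_{L^n}\le C\varepsilon^{p^\sharp}=C\,\varepsilon\cdot\varepsilon^{p^\sharp-1},
\]
where the $L^n$ norm of $\nabla\zeta$ is scale invariant and bounded by $C(n)$.

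\textbf{Conclusion.} Summing Steps 2 and 3 gives the bound $C(\varepsilon+\|\nabla u\|_{L^p(H)})\varepsilon^{p^\sharp-1}$. The ``in particular'' statement follows because $p^\sharp>1$.

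\textbf{Main obstacle.} The main technical point is justifying the divergence theorem and the chain rule for $|u|^{p^\sharp}$ when $u$ is only in $\dot W^{1,p}(H)$ with a trace. I would first prove the estimate for $u\in C^1_c(\overline H)$. I would then pass to general $u$ by density, using continuity of the trace into $L^{p^\sharp}_{\rm loc}(\partial H)$ and Sobolev convergence in $L^{p^*}$. The hypothesis on the annulus is only approximately preserved under approximation, but this is harmless because both sides of the estimate are continuous in these norms.
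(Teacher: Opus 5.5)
Your proof is correct and is essentially the paper's argument unpacked. The paper applies the divergence-theorem bound \eqref{eqn: asymptotic} to the cut-off function $\psi_R u$, using $\|\nabla(\psi_R u)\|_{L^p(H)}\le \|\nabla u\|_{L^p(H)}+C'\varepsilon$ by H\"older, whereas you run the same divergence-theorem and H\"older computation directly on $\zeta|u|^{p^\sharp}$ and get the identical bound; quoting \eqref{eqn: asymptotic} lets the paper skip the density step you flag, and your handling of that step (state the estimate in terms of $\|u\|_{L^{p^*}(A)}$ for smooth $u$, then pass to the limit) is fine.
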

\begin{proof}
Choose a cutoff function $\psi:\R^n \to \R$  with $0 \leq \psi \leq 1$,  $\psi=1$ on $B_{7}\setminus B_{2}$, $\psi = 0$ on $B_{1} \cup (\R^n\setminus B_8)$, and let $\psi_R (x)  = \psi((x-x_0)/R)$. 
Then the function $v  = \psi_R u$ has $\|v\|_{L^{p^*}(H)}\leq \varepsilon$ and 
\[
\|\nabla v \|_{L^p(H)} \leq \|\nabla u\|_{L^p(H)} + \|u\nabla \psi_{R}\|_{L^p(H)}  \leq \|\nabla u\|_{L^p(H)} + \|u\|_{L^{p^*}(H\cap B_{8R}(x_0)\setminus B_R(x_0))} \| \nabla \psi_R\|_{L^n(\R^n)}\,.
\]
By scaling, $\| \nabla \psi_R\|_{L^n(\R^n)}=\| \nabla \psi\|_{L^n(\R^n)}$, so $\|\nabla v \|_{L^p(H)} \leq  \|\nabla u\|_{L^p(H)}+ C'\varepsilon$
for a constant $C'=C'(n)$. If $\|v\|_{L^{p^*}(H)}=0$, there is nothing to show. Otherwise, applying \eqref{eqn: asymptotic} to $v$ shows that 
\[
\|\nabla u\|_{L^p(H)}+ C'\varepsilon \geq \| v\|_{L^{p^*}(H)}\Phi_H\bigg(\mfrac{\|v \|_{L^{p^\sharp}(\partial H)}}{\| v\|_{L^{p^*}(H)}}\bigg) 
\geq \frac{\|v \|_{L^{p^\sharp}(\partial H)}^{p^\sharp}}{p^\sharp\| v\|_{L^{p^*}(H)}^{p^\sharp-1}} \geq \frac{\|v \|_{L^{p^\sharp}(\partial H)}^{p^\sharp}}{p^\sharp\varepsilon^{p^\sharp-1}},
\]
which, after rearranging, completes the proof.
\end{proof}
With these lemmas in hand, we can now prove Theorem~\ref{thm: qual stability}.
\begin{proof}[Proof of Theorem~\ref{thm: qual stability}]
    Let $\{\hat{u}_k\} \subset \compset$ be a sequence with $\delta_T(\hat{u}_k)\to0$. We begin by normalizing. For each $k \in \mathbb{N},$ choose $y_k \in \R^n$ and $R_k>0$ so 
    \begin{equation}
        \label{eqn: normalize 1}
       \sup_{y \in \R^n} \int_{H\cap B_{R_k}(y)} |\hat{u}_k|^{p^*}\, dx = \int_{H\cap B_{R_k}(y_k)} |\hat{u}_k|^{p^*}\,dx = \frac{1}{2}\,. 
      \end{equation}
By set containment, we may choose $y_k \in \overline{H}$.
Writing $y_k=(y_k^1, y_k')$, we set $\tilde{u}_k(\cdot) = \hat{u}_k(\cdot + (0, y_k'))$, so that $\tilde{u}_k \in\compset$ has $\delta_T(\tilde{u}_k)\to 0$ and satisfies \eqref{eqn: normalize 1} with $y_k^1 e_1$ in place of $y_k$. Next, consider the rescaled function $u_k \in \compset$ defined by
    $u_k(x) = R_k^{({n-p})/{p}} \tilde{u}_k({R_k}x)$, which has 
    \begin{equation}
        \label{eqn: normalize}
    \sup_{y \in \R^n} \int_{H\cap B_1(y)} |u_k|^{p^*}\, dx = \int_{H\cap B_1(d_ke_1)} |u_k|^{p^*}\,dx = \frac{1}{2}\,,
       \end{equation}
where $d_k = y^1_k/R_k \in [0,\infty).$ Once again, $\delta_T(u_k)=\delta_T(\hat{u}_k)\to0$ by scaling.

Apply Lemma~\ref{lem: CC1} to the sequence of probability measures $\nu_k = |u_k|^{p^*}\,\mathcal{L}^n\llcorner H$.
We will rule out both the vanishing alternative and the dichotomy alternative using Theorem~\ref{thm: strict energy gap} as follows.

{\it Vanishing does not occur.} We claim that $A:=\limsup_{k\to \infty} d_k <\infty$, which rules out the vanishing alternative, since in this case $\nu_k(B_{A+1}(0)) \geq 1/2$ for all $k$ large. Suppose by way of contradiction that, up to an un-relabeled subsequence, $d_k \to \infty.$ We will see this forces splitting of the sequence which is energetically expensive.

Let $J_k = \lfloor \log_2(d_k)\rfloor \in \mathbb{N}$, so that  $2^{J_k} \leq d_k \leq 2^{J_k+1}$. For at least one $1\leq j \leq J_k$, we have 
\begin{equation}
    \label{eqn: small mass}
\int_{H \cap B_{2^{j}}(d_ke_1) \setminus B_{2^{j-1}}(d_ke_1) }|u_k|^{p^*} \,dx \leq \frac{2}{\log_2(d_k)},
\end{equation}
as otherwise $\int_H |u_k|^{p^*}\,dx \ge 2J_k/\log_2(d_k)\ge 2J_k/(J_k+1)>1$ for $k$ large enough.

Let $\rho_k= 2^{j_k^*}$ where $j_k^*$ is the first $j$ for which \eqref{eqn: small mass} holds. Fix a smooth nonnegative cutoff function $\psi:\R^n\to [0,1]$ with supported in $B_1$ with $\psi = 1 $ in $B_{1/2}$. Let $\psi_k(\cdot ) = \psi((\cdot-d_ke_1)/ \rho_k)$ and $m_k = \|u_k \psi_k\|_{L^{p^*}(H)}$ and $\ell_k = \|u_k (1-\psi_k)\|_{L^{p^*}(H)}.$ Note that $m_k^{p^*} +\ell_k^{p^*} = 1+o_k(1)$ by \eqref{eqn: small mass} and our choice of $\rho_k$. Up to a subsequence (not relabeled), $m_k \to m_*$ and $\ell_k \to \ell_*$, where $ m_*^{p^*} + \ell_*^{p^*}=1$ and $m_*^{p^*} \in [1/2,1]$ by \eqref{eqn: normalize}. 

First suppose $m_*<1$ and thus $\ell_*>0.$ Using  $\psi_k^p+(1-\psi_k)^p \leq 1$, \eqref{eqn: small mass}, and H\"{o}lder's inequality just as in the proof of Lemma~\ref{lem: small trace}, we find that
\begin{align*}
\int_H |\nabla u_k|^p\, dx &\geq \int_H |\nabla(u_k \psi_{k})|^p \,dx + \int_H |\nabla (u_k(1-\psi_k))|^p \,dx +o_k(1)
\end{align*}
Note that $u_k \psi_k$ is compactly supported in $H$.
So, applying the classical Sobolev inequality and \eqref{eqn: trace Sobolev} to the first and second term respectively and using the continuity of $T\mapsto \Phi_H(T)$, we find
\begin{align*}
    \int_H |\nabla u_k|^p\, dx
& \geq m_k^p S_{n,p}^p  + \ell_k^p \Phi_H(T/\ell_k)^p + o_k(1) = S_{n,p}^p m_*^p + \ell_*^p \Phi_H(T/\ell_*)^p + o_k(1).
\end{align*}
Keeping in mind that $S_{n,p}=\Phi_H(0)$ and that $\int_H |\nabla u_k|^p \,dx = \Phi_H(T)^p +o_k(1)$, this
 violates Theorem~\ref{thm: strict energy gap}.

Next, suppose $m_*=1$ and thus $\ell_*=0$. Notice that $\ell_k>0$ for each $k$, otherwise the trace constraint is violated. In this case, arguing similarly but now applying the Sobolev inequality to the first term and  \eqref{eqn: trace Sobolev}  followed by \eqref{eqn: asymptotic} to the second,  we find 
\begin{align*}
\int_H |\nabla u_k|^p\, dx &\geq \int_H |\nabla(u_k \psi_{k})|^p \,dx + \int_H |\nabla (u_k(1-\psi_k))|^p \,dx \\
& \geq m_k^pS_{n,p}^p  + \ell_k^p \Phi_H(T/\ell_k)^p + o_k(1)\geq m_*^p S_{n,p}^p  + \Big(\frac{1}{p^\sharp}\frac{T^{p^\sharp}}{\ell_k^{p^\sharp -1}}\Big)^p+o_k(1)\to +\infty,
\end{align*}
a contradiction. We conclude that $d_k$ is bounded and thus vanishing does not occur.\\

{\it Dichotomy does not occur.}
Suppose by way of contradiction that the dichotomy alternative holds with splitting proportion $\lambda\in (0,1)$. Take a sequence $\varepsilon_k\to 0$. By Lemma~\ref{lem: CC1} and a diagonal argument, up to a subsequence, we may find $R_k \to \infty$ and $\{x_k\}$ such that the measures $\nu_k^1 = \nu_k \llcorner B_{R_k}(x_k)$ and $ \nu_k^2 = \nu_k \llcorner (\overline{H} \setminus B_{8R_k}(x_k))$ satisfy
\begin{equation}
    \label{eqn: dichotomy outcome}
\limsup_{k \to \infty}\big\{|\nu_k^1(\overline{H}) -\lambda| + |\nu_k^2(\overline{H}) -(1-\lambda)|\big\}=0\,.
\end{equation}

Now, take a smooth cutoff function $\varphi: \R^n \to [0,1]$ with $\varphi= 1$ on $B_2$ and $\varphi=0$ on $\R^n\setminus B_3$. Similarly let $\eta: \R^n \to [0,1]$ have $\eta = 0$ in $B_6$ and $\eta = 1$ on $\R^n\setminus B_7$. 
Let $\varphi_k (x) = \varphi (\frac{x-x_k}{R_k})$ and $\eta_k(x) = \eta (\frac{x-x_k}{R_k})$. Then setting $\vv_1^{p^*}=\lambda$ and $\vv_2^{p^*}=1-\lambda$, from \eqref{eqn: dichotomy outcome} we have
\[
\int_H|u_k\varphi_k|^{p^*}\,dx = \vv_1^{p^*} +o_k(1), \qquad \int_H |u_k\eta_k|^{p^*}\,dx = \vv_2^{p^*} +o_k(1)
\]
By \eqref{eqn: dichotomy outcome} and Lemma~\ref{lem: small trace}, we also have $\int_{\partial H \cap B_{7R_k}(x_k)\setminus B_{2R_k}(x_k)} |u_k|^{p^\sharp} \, d\mathcal{H}^{n-1} \to 0$, so
in particular,
\[
T^{p^\sharp} =\int_{\partial H} |u_k \varphi_k|^{p^\sharp} \,d\mathcal{H}^{n-1} + \int_{\partial H} |u_k \eta_k|^{p^\sharp} \,d\mathcal{H}^{n-1} +o_k(1)\,.
\]
Up to passing to a further subsequence, there exist $\ttt_1, \ttt_2\geq 0$ with $\ttt_1^{p^\sharp} + \ttt_2^{p^\sharp} = T^{p^\sharp}$ such that 
\[
\int_{\partial H} |u_k \varphi_k|^{p^\sharp} \,d\mathcal{H}^{n-1}=  \ttt_1^{p^\sharp}  +o_k(1),\qquad \int_{\partial H} |u_k \eta_k|^{p^\sharp} \,d\mathcal{H}^{n-1}= \ttt_2^{p^\sharp}  +o_k(1)\,.
\]

Similarly, using $1\geq \varphi^p +\eta^p$ and $\nu_k(B_{8R_k}(x_k)\setminus B_{R_k}(x_k))\to0$, and applying H\"{o}lder's inequality and scaling as in the proof of Lemma~\ref{lem: small trace}, we obtain
\[
\int_H |\nabla u_k|^p \,dx \geq \int_H |\nabla (u_k \varphi_k)|^p\,dx  +\int_H |\nabla (u_k\eta_k)|^p\,dx + o_k(1)\,.
\]
So, applying \eqref{eqn: trace Sobolev} to  $u_k\varphi_k$ and $u_k\eta_k$ separately and using the continuity of the mapping $T\mapsto \Phi_H(T)$, we have 
\begin{align*}
  \int_H |\nabla (u_k \varphi_k)|^p\,dx  +\int_H |\nabla (u_k\eta_k)|^p\,dx  \geq \vv_1^p \Phi_H\Big(\mfrac{\ttt_1}{\vv_1}\Big)^p + \vv_2^p \Phi_H\Big(\mfrac{\ttt_2}{\vv_2}\Big)^p + o_k(1).
\end{align*}
Since on the other hand, the left-hand side is bounded above by $\int_H |\nabla u_k|^p \,dx= \Phi_H(T)^p+o_k(1)$, we reach a contradiction to Theorem~\ref{thm: strict energy gap} for sufficiently large $k$. Thus the dichotomy alternative cannot occur.\\

So, the concentration alternative occurs in Lemma~\ref{lem: CC1}. In particular, we can find $\{x_k\}\subset \R^n$ and $R_0>0$ such that $\int_{B_{R_0}(x_k)}|u_k|^{p^*} >\frac{1}{2}$. Thanks to \eqref{eqn: normalize}, this means $B_1(d_k e_1)\cap B_{R_0}(x_k)$ is nonempty. Since $|d_k|$ is bounded uniformly in $k$, the same is true for $|x_k|$. So, the concentration alternative guarantees that for any $\varepsilon>0$, there exists $R>0$ such that
\[
\int_{B_R(0)} d \nu_k \geq 1-\varepsilon.
\]
So, up to a subsequence, $\nu_k \overset{*}\rightharpoonup \nu$ for a measure $\nu$ on $\overline{H}$ with $\nu(\overline{H})=1$. Applying the argument of Lemma~\ref{lem: small trace} but now taking $\psi:\R^n\to\R$ to be a cutoff with $\psi =0$ in $B_1$ and $\psi = 1$ on $\R^n\setminus B_2$, we also see that the measures $\tau_k = |u_k|^{p^\sharp} \,\mathcal{H}^{n-1}\llcorner \partial H$ have $\tau_k \overset{*}\rightharpoonup \tau$ for a measure $\tau$ on $\partial H$ with $\tau(\partial H)=T^{p^\sharp}.$\\

Now, let $\mu_k = |\nabla u_k|^p \mathcal{L}^n\llcorner H$. Up to a further subsequence, $\mu_k \overset{*}\rightharpoonup \mu$ for measure $\mu$ on $\overline{H}$. We apply Lemma~\ref{lem: CC2}. Up to a subsequence, $u_k \rightharpoonup u$ in $\dot{W}^{1,p}(H)$, $L^{p^*}(H)$, and $L^{p^\sharp}(\partial H)$. Then, since $\delta_T(u_k)\to0,$
\begin{align}\label{eqn: final part}
    \Phi_H(T)^p = \lim_{k\to \infty} \mu_k(\overline{H}) \geq  \mu(\overline{H})& \geq \| u\|_{L^{p^*}(H)}^{p}\Phi_H\Big(\mfrac{\|u\|_{L^{p^\sharp}(\partial H)}}{\| u\|_{L^{p^*}(H)}}\Big)^p + \sum_{i \in I} \vv_i^p \Phi_H\Big(\mfrac{\ttt_i}{\vv_i}\Big)^p\,\\
   &\geq \| u\|_{L^{p^*}(H)}^{p}\Phi_H\Big(\mfrac{\|u\|_{L^{p^\sharp}(\partial H)}}{\| u\|_{L^{p^*}(H)}}\Big)^p +\big( \sum_{i \in I} \vv_i^{p^*}\big)^{p/p^*}\Phi_H\bigg(\mfrac{(\sum_{i\in I}\ttt_i^{p^\sharp})^{1/p^\sharp}}{(\sum_{i \in I} \vv_i^{p^*})^{1/p^*}}\bigg)^p .
   \nonumber
\end{align}
In the last inequality, we use that Theorem~\ref{thm: strict energy gap} passes to countable sums at the expense of losing the strict inequality; the proof is a basic analysis exercise.
Finally, we apply Theorem~\ref{thm: strict energy gap} to bound the right-hand side below by $\Phi_H(T)^p$. The inequality must be strict (a contradiction) unless either $\| u\|_{L^{p^*}(H)}$ or $\sum_{i \in I} \vv_i^{p^*}$ is zero. 
If $\| u\|_{L^{p^*}(H)}=0$, then $\sum_{i \in I} \vv_i^{p^*}=1$.
Since the normalization \eqref{eqn: normalize} guarantees that $\vv_i^{p^*} \leq 1/2$ for each $i\in I$, then instead peeling off the first term in the sum on the right-hand side of \eqref{eqn: final part} and again applying Theorem~\ref{thm: strict energy gap} yields a contradiction. 
So, $\vv_i=0$ for each $i \in I$ and $\nu  = |u|^{p^*}\,\mathcal{L}^n$.
From Lemma~\ref{lem: CC2}, this means the index set $I$ is empty and thus $\tau = |u|^{p^\sharp} \mathcal{H}^{n-1} \llcorner \partial H$. 

This means $\|u_k\|_{L^{p^*}(H)} \to \|u\|_{L^{p^*}(H)}$ and $\|u_k\|_{L^{p^\sharp}(\partial H)} \to \|u\|_{L^{p^\sharp}(\partial H)}$, and so the weak convergence upgrades to strong convergence: $u_k \to u$ in $L^{p^*}(H)$ and $L^{p^\sharp}(\partial H).$ 
In particular, $u \in \compset$ and $\int_H |\nabla u|^p \geq \Phi_H(T)^p$. 
Combining this with lower semicontinuity and the fact that $\int_H |\nabla u_k|^p \to \Phi_H(T)^p$,  we have $\|\nabla u_k\|_{L^p(H)}\to \|\nabla u\|_{L^p(H)} =\Phi_H(T)$. This means (a) $u \in \mathcal{M}_T$, and (b) again the weak convergence upgrades to $\nabla u_k \to \nabla u$ in $L^p(H)$.  Scaling back, this means that for the original sequence, $d_T(\hat{u}_k)\to 0.$ We have passed to various subsequences, but since each subsequence has a further subsequence to which the argument applies, we obtain the conclusion for the entire sequence.
\end{proof}

Finally we show how Theorem~\ref{thm: qual stability} combined with the local stability result of \cite{FLZ} implies Corollary~\ref{cor: quant stability}.
\begin{proof}[Proof of Corollary~\ref{cor: quant stability}]
    Observe that for any $u \in \compset,$
    \[
d_T(u)^2\leq 2\|\nabla u\|_{L^2(H)}^2 +2 \|\nabla U_T\|_{L^2(H)}^2\leq 4 \|\nabla u\|_{L^2(H)}^2.
    \]
    So, for any fixed number $\delta_0\in (0,1)$, the desired inequality holds with constant $\alpha_T' = \frac{\delta_0}{4}$ among those functions $u\in \compset$ with $\delta_T(u) \geq \delta_0\| \nabla u\|_{L^2(H)}^2$. 
It thus suffices to prove the inequality when $\delta_T(u) \leq \delta_0 \| \nabla u\|_{L^2(H)}^2$, or equivalently after rearranging terms, when $\delta_T(u) \leq \frac{\delta_0}{1-\delta_0}\Phi_H(T)^2$ for a small enough fixed $\delta_0$ of our choosing. 

Let $\varepsilon>0$ be chosen so that, in the main result of \cite{FLZ}, we have $\delta_T(u) \geq \frac{\alpha_T}{2} d_T(u)^2$ provided $d_T(u)^2 \leq \varepsilon$. 
By Theorem~\ref{thm: qual stability}, there exists $\delta_0>0$ such that if $\delta_T(u) \leq \frac{\delta_0}{1-\delta_0}\Phi_H(T)^2$, then $d_T(u)^2 \leq \varepsilon$. 
Thus the desired stability inequality holds with $\alpha_T' = \frac{\alpha_T}{2}$ for $u\in \compset$ with $\delta_T(u) \leq \delta_0 \| \nabla u\|_{L^2(H)}^2$, and therefore for all $u \in \compset$ with $\alpha_T' = \min\{\frac{\alpha_T}{2}, \frac{\delta_0}{4}\}.$ 
\end{proof}

\bibliographystyle{alpha}
\bibliography{references}

\newcommand{\etalchar}[1]{$^{#1}$}
\begin{thebibliography}{CFMP09}

\bibitem[Aub76a]{AubinYamabe}
T.~Aubin.
\newblock \'{E}quations diff\'{e}rentielles non lin\'{e}aires et probl\`eme de
  {Y}amabe concernant la courbure scalaire.
\newblock {\em J. Math. Pures Appl. (9)}, 55(3):269--296, 1976.

\bibitem[Aub76b]{aubin1976}
T.~Aubin.
\newblock Probl\`{e}mes isop\'{e}rim\'{e}triques et espaces de {S}obolev.
\newblock {\em J. Differential Geom.}, 11(4):573--598, 1976.

\bibitem[BE91]{BianchiEgnell91}
G.~Bianchi and H.~Egnell.
\newblock A note on the {S}obolev inequality.
\newblock {\em J. Funct. Anal.}, 100(1):18--24, 1991.

\bibitem[Bec93]{Beckner93}
W.~Beckner.
\newblock Sharp {S}obolev inequalities on the sphere and the
  {M}oser-{T}rudinger inequality.
\newblock {\em Ann. of Math. (2)}, 138(1):213--242, 1993.

\bibitem[Bre91]{Brenier91}
Y.~Brenier.
\newblock Polar factorization and monotone rearrangement of vector-valued
  functions.
\newblock {\em Comm. Pure Appl. Math.}, 44(4):375--417, 1991.

\bibitem[CENV04]{CENV2004}
D.~Cordero-Erausquin, B.~Nazaret, and C.~Villani.
\newblock A mass-transportation approach to sharp {S}obolev and
  {G}agliardo-{N}irenberg inequalities.
\newblock {\em Adv. Math.}, 182(2):307--332, 2004.

\bibitem[CFMP09]{ciafusmag07}
A.~Cianchi, N.~Fusco, F.~Maggi, and A.~Pratelli.
\newblock The sharp {S}obolev inequality in quantitative form.
\newblock {\em J. Eur. Math. Soc.}, 11(5):1105--1139, 2009.

\bibitem[CL90]{CLcompetingsym}
Eric~A. Carlen and Michael Loss.
\newblock Extremals of functionals with competing symmetries.
\newblock {\em J. Funct. Anal.}, 88(2):437--456, 1990.

\bibitem[CL94]{carlenloss}
E.~A. Carlen and M.~Loss.
\newblock On the minimization of symmetric functionals.
\newblock {\em Rev. Math. Phys.}, 6(5A):1011--1032, 1994.
\newblock Special issue dedicated to Elliott H. Lieb.

\bibitem[DEF{\etalchar{+}}25]{DEFFL}
J.~Dolbeault, M.~J. Esteban, A.~Figalli, R.~L. Frank, and M.~Loss.
\newblock Sharp stability for {S}obolev and log-{S}obolev inequalities, with
  optimal dimensional dependence.
\newblock {\em Camb. J. Math.}, 13(2):359--430, 2025.

\bibitem[Esc88]{Escobar1988}
J.~F. Escobar.
\newblock Sharp constant in a {S}obolev trace inequality.
\newblock {\em Indiana Univ. Math. J.}, 37(3):687--698, 1988.

\bibitem[Esc92a]{Escobar92}
J.~F. Escobar.
\newblock Conformal deformation of a {R}iemannian metric to a scalar flat
  metric with constant mean curvature on the boundary.
\newblock {\em Ann. of Math. (2)}, 136(1):1--50, 1992.

\bibitem[Esc92b]{Escobar92b}
J.~F. Escobar.
\newblock The {Y}amabe problem on manifolds with boundary.
\newblock {\em J. Differential Geom.}, 35(1):21--84, 1992.

\bibitem[FLZ26]{FLZ}
S.~Fan, G.-D. Li, and J.J. Zhang.
\newblock Stability of the {S}obolev--{E}scobar bridge inequality.
\newblock {\em arXiv:2604.12677}, 2026.

\bibitem[FMP10]{FMP}
A.~Figalli, F.~Maggi, and A.~Pratelli.
\newblock A mass transportation approach to quantitative isoperimetric
  inequalities.
\newblock {\em Invent. Math.}, 182(1):167--211, 2010.

\bibitem[FMP13]{figmagpraa}
A.~Figalli, F.~Maggi, and A.~Pratelli.
\newblock Sharp stability theorems for the anisotropic {S}obolev and
  log-{S}obolev inequalities on functions of bounded variation.
\newblock {\em Adv. Math.}, 242:80--101, 2013.

\bibitem[FN19]{FN}
A.~Figalli and R.~Neumayer.
\newblock Gradient stability for the {S}obolev inequality: the case {$p\geq
  2$}.
\newblock {\em J. Eur. Math. Soc. (JEMS)}, 21(2):319--354, 2019.

\bibitem[FZ22]{FZ}
A.~Figalli and Y.~R.-Y. Zhang.
\newblock Sharp gradient stability for the {S}obolev inequality.
\newblock {\em Duke Math. J.}, 171(12):2407--2459, 2022.

\bibitem[Ho22]{Ho22}
P.~T. Ho.
\newblock A note on the {S}obolev trace inequality.
\newblock {\em Proc. Amer. Math. Soc.}, 150(3):1257--1267, 2022.

\bibitem[Lio84]{CCLocallyCompact}
P.-L. Lions.
\newblock The concentration-compactness principle in the calculus of
  variations. {T}he locally compact case. {I}.
\newblock {\em Ann. Inst. H. Poincar\'e{} Anal. Non Lin\'eaire}, 1(2):109--145,
  1984.

\bibitem[Lio85]{lions1985}
P.-L. Lions.
\newblock The concentration-compactness principle in the calculus of
  variations. {T}he limit case. {I}.
\newblock {\em Rev. Mat. Iberoamericana}, 1(1):145--201, 1985.

\bibitem[Mag23]{MaggiOTBook}
F.~Maggi.
\newblock {\em Optimal mass transport on {E}uclidean spaces}, volume 207 of
  {\em Cambridge Studies in Advanced Mathematics}.
\newblock Cambridge University Press, Cambridge, 2023.

\bibitem[McC97]{McCann97}
R.J. McCann.
\newblock A convexity principle for interacting gases.
\newblock {\em Adv. Math.}, 128(1):153--179, 1997.

\bibitem[MN17]{MN}
F.~Maggi and R.~Neumayer.
\newblock A bridge between {S}obolev and {E}scobar inequalities and beyond.
\newblock {\em J. Funct. Anal.}, 273(6):2070--2106, 2017.

\bibitem[MNT23]{MNT}
F.~Maggi, R.~Neumayer, and I.~Tomasetti.
\newblock Rigidity theorems for best {S}obolev inequalities.
\newblock {\em Adv. Math.}, 434:Paper No. 109330, 43, 2023.

\bibitem[MV05]{maggivillaniJGA}
F.~Maggi and C.~Villani.
\newblock Balls have the worst best {S}obolev inequalities.
\newblock {\em J. Geom. Anal.}, 15(1):83--121, 2005.

\bibitem[Naz06]{Nazaret2006}
B.~Nazaret.
\newblock Best constant in {S}obolev trace inequalities on the half-space.
\newblock {\em Nonlinear Anal.}, 65(10):1977--1985, 2006.

\bibitem[Neu20]{Neumayer}
R.~Neumayer.
\newblock A note on strong-form stability for the {S}obolev inequality.
\newblock {\em Calc. Var. Partial Differential Equations}, 59(1):Paper No. 25,
  8, 2020.

\bibitem[Sch84]{SchYamabe}
R.~Schoen.
\newblock Conformal deformation of a {R}iemannian metric to constant scalar
  curvature.
\newblock {\em J. Differential Geom.}, 20(2):479--495, 1984.

\bibitem[Str08]{Struwe}
M.~Struwe.
\newblock {\em Variational methods}, volume~34 of {\em Ergebnisse der
  Mathematik und ihrer Grenzgebiete. 3. Folge. A Series of Modern Surveys in
  Mathematics [Results in Mathematics and Related Areas. 3rd Series. A Series
  of Modern Surveys in Mathematics]}.
\newblock Springer-Verlag, Berlin, fourth edition, 2008.
\newblock Applications to nonlinear partial differential equations and
  Hamiltonian systems.

\bibitem[Tal76]{talenti1976best}
G.~Talenti.
\newblock Best constant in {S}obolev inequality.
\newblock {\em Ann. Mat. Pura Appl. (4)}, 110:353--372, 1976.

\bibitem[Tru68]{Trudinger}
N.~S. Trudinger.
\newblock Remarks concerning the conformal deformation of {R}iemannian
  structures on compact manifolds.
\newblock {\em Ann. Scuola Norm. Sup. Pisa Cl. Sci. (3)}, 22:265--274, 1968.

\bibitem[Vil03]{VillaniBook}
C.~Villani.
\newblock {\em Topics in optimal transportation}, volume~58 of {\em Graduate
  Studies in Mathematics}.
\newblock American Mathematical Society, Providence, RI, 2003.

\bibitem[Yam60]{OGYamabe}
H.~Yamabe.
\newblock On a deformation of {R}iemannian structures on compact manifolds.
\newblock {\em Osaka Math. J.}, 12:21--37, 1960.

\end{thebibliography}

\end{document}